\long\def\NOTE#1{} 
\newtheorem{thm}{Theorem}[section]
\newtheorem{cor}[thm]{Corollary}
\newtheorem{lem}[thm]{Lemma}
\theoremstyle{definition}
\newtheorem{pb}[thm]{Problem}
\theoremstyle{remark}
\newtheorem{rem}[thm]{Remark}
\numberwithin{equation}{section}
\renewcommand{\L}{\Phi}
\newcommand{\Ph}{\Psi}
\renewcommand{\u}{h}
\renewcommand{\v}{w}
\renewcommand{\l}{\varphi}
\newcommand{\ph}{\psi}
\newcommand{\m}{\mu}
\newcommand{\ps}{\zeta}
\newcommand{\T}{S}
\newcommand{\sumi}{{ \sum_{i=1}^L}}
\newcommand{\NF}{L}
\newcommand{\NT}{M}
\newcommand{\Sq}{\Sigma}
\newcommand{\matR}{\mathbf{R}}
\newcommand{\matS}{\mathbf{S}}
\newcommand{\matM}{\mathbf{M}}
\newcommand{\Traces}{\mathscr{T}}
\newcommand{\Fractures}{\mathscr{F}}
\newcommand{\Humi}{H^{1/2}_{F_i}}
\newcommand{\Hmumi}{H^{-1/2}_{F_i}}
\newcommand{\summi}{{\sum_{m\in \Ti}}}
\newcommand{\summ}{{\sum_{m=1}^M}}
\newcommand{\calA}{\mathcal{A}}
\newcommand{\calV}{\mathcal{V}}
\newcommand{\calS}{\mathcal{S}}
\newcommand{\calG}{\mathcal{G}}
\newcommand{\stabweight}{\omega}
\newcommand{\calSnew}{\mathcal{S}^h}
\newcommand{\calGnew}{\mathcal{G}^h}
\newcommand{\gsep}{\gamma_0}
\newcommand{\huh}{\widehat \u_\delta}
\newcommand{\normoneh}[1]{\| #1 \|_{1,\h}}
\newcommand{\Basis}{\mathscr{B}}
\newcommand{\tBasis}{\widetilde{\Basis}}
\newcommand{\hBasis}{\widehat{\Basis}}
\newcommand{\F}{F}
\newcommand{\Th}{\mathcal{T}_\delta}
\newcommand{\TSq}{\mathcal{K}_\delta}
\newcommand{\supp}{\mathrm{supp}\,}
\newcommand{\Thi}{{\Th^i}}
\newcommand{\Edges}{\mathcal{E}_\h^i}
\newcommand{\h}{\delta}
\newcommand{\nK}{\nu_T}
\newcommand{\K}{T}
\newcommand{\bK}{\partial \K}
\newcommand{\jump}[1]{\llbracket #1 \rrbracket}
\newcommand{\scjump}[1]{[ #1  ]}
\newcommand{\norm}{\nu}
\newcommand{\Fi}{{\F_i}}
\newcommand{\Sqi}{{\Sq_i}}
\renewcommand{\O}{\Omega}
\newcommand{\Lh}{\L_\h}
\newcommand{\Phh}{\Ph_\h}
\newcommand{\uh}{\u_\h}
\newcommand{\vh}{\v_\h}
\newcommand{\lh}{{\l_\h}}
\newcommand{\phh}{\ph_\h}
\newcommand{\mh}{\m_\h}
\newcommand{\psh}{\ps_\h}
\newcommand{\Wauxi}{W_\h^i}
\newcommand{\Wauxim}{W_\h^{i,m}}
\newcommand{\wh}{\widehat w_\h}
\newcommand{\Taux}{\widehat{\mathcal{T}}}
\newcommand{\hw}{\widehat w}
\newcommand{\tw}{\widetilde w}
\newcommand{\auxb}{\eta}
\newcommand{\Nodesaux}{\widehat{\mathcal{N}}}
\newcommand{\weight}{\theta}
\newcommand{\stab}[2]{[#1,#2]_{i,m,\h}}
\newcommand{\stabi}[2]{{[#1,#2]}_{-1,i,\h}}
\newcommand{\glstab}[2]{[#1,#2]_{-1,\h}}
\newcommand{\A}{\mathfrak A}
\newcommand{\B}{\mathfrak B}
\newcommand{\Ti}{\mathscr{T}_i}
\newcommand{\Fm}{\mathscr{F}_m}
\newcommand{\tM}{\widetilde \matM^m}
\newcommand{\hExt}[1]{\widehat{\mathcal{E}}(#1)}
\newcommand{\ccoerca}{\kappa_1}
\newcommand{\ccoercb}{\kappa_2}
\newcommand{\const}[1]{C_{#1}}
\newcommand{\lhi}{\l_\h^i}
\newcommand{\lhim}{\l_\h^{i,m}}
\begin{document}

\title[]{A Stabilized Three Fields formulation for \\ Discrete Fracture Networks}%

 \author{Stefano Berrone}%
\address{DISMA -- Politecnico di Torino}%
\email{stefano.berrone@polito.it}%

 \author{Silvia Bertoluzza}%
\address{IMATI-CNR}%
\email{silvia.bertoluzza@imati.cnr.it}%

 \author{Stefano Scial\`o}%
\address{DISMA -- Politecnico di Torino}%
\email{stefano.scialo@polito.it}%

\date{\today}
\thanks{This paper was funded by the MIUR Progetti di Ricerca di Rilevante Interesse Nazionale (PRIN) Bando 2022 PNRR, grant P2022BH5CB (funded by the European Union - Next Generation EU). The authors are members of the Gruppo Nazionale Calcolo Scientifico -- Istituto Nazionale di Alta Matematica (GNCS-INdAM)}

\maketitle

\begin{abstract}
	We propose a hybridized domain decomposition formulation of the discrete fracture network model, allowing for independent discretization of the individual fractures. A natural norm stabilization, obtained by penalizing the residual measured in the norm for the space where it naturally ``lives'',  is added to the local problem in the individual fracture so that no compatibility condition of inf-sup type is required  between the Lagrange multiplier and the primal unknown, which can then be discretized independently of each other. Optimal stability and error estimates are proven, which are confirmed by numerical tests.
\end{abstract}

%

\renewcommand{\omega}{\alpha}

\section{Introduction}

Discrete Fracture Networks, or DFNs, are networks of intersecting planar polygons, resembling the fractures in a porous medium. DFN models can be used in flow simulations of the subsoil, and in this case they are composed, typically, by a large number of polygonal fractures with arbitrary intersections, forming complex networks with a strong multi-scale structure. Indeed fracture dimensions can range from centimeters to kilometers and form intersections, that are one dimensional manifolds, wich can also span several lenght scales. Further, such networks are generated from probability distribution functions of hydraulic and geological soil properties \cite{CLMTBDFP,DeDreuzyEtAlii01,Dowd2007},  so that, in practical cases, a large number of simulations is required to obtain reliable statistics of the quantities of interest. 

DFNs are particularly effective when the presence of fractures can sensibly alter relevant flow properties, such as flow directionality and preferential flow paths. In these circumstances homogenization based approaches \cite{Berkowitz2002} might not provide reliable outcomes \cite{Fidelibus2009,Sahimi2011}. On the other hand geometrical complexity and size of the resulting computational domain might be a limiting factor for the use of DFNs. Standard approaches, indeed, rely on mesh conformity at fracture intersections to enforce matching conditions, but the generation of good quality conforming meshes might turn out to be unfeasible for real applications \cite{deDreuzy13,Antonietti2016,FKS}, for the above mentioned complexities. 

Different approaches have been suggested to overcome the difficulties in DFN simulations. In \cite{DF99,BODIN2007}, for example, the DFN is replaced by a set of one-dimensional channels resembling the connections among fractures in the network, whereas in \cite{NOETJCP15} the solution in the interior of the fractures is written in terms of the interface unknowns, thus reducing problem size. Other works suggest the use of graph theory tools to efficiently handle DFN simulations \cite{Karra2018,HOBE2018,HymanMLNature}. 
Other approaches, instead focus on the development of new methods to efficiently compute conforming meshes. A possibility consists in introducing small modifications of the network geometry to ease meshing \cite{Fourno2019}. For stochastically generated networks, instead, hard-to-mesh networks can be replaced by equivalent analogues, as proposed in \cite{Gable2014,Gable2015bTransp}. Numerical methods based on polygonal/polyhedral meshes are also attractive, for the possibility of easily generating and handling meshes of complex domains: examples of DFN simulations are available based on Virtual Elements in \cite{BBBPS,BBSmixed,Fumagalli2018}, or with Mimetic Finite Differences in \cite{Antonietti2016} and Hybrid High-Order Methods in \cite{Chave2018}. 
Domain decomposition approaches are also explored, based on mortar methods \cite{Vohralik2007,Pichot2012,Pichot2014} and allowing a partial non-conformity of the mesh with the interfaces, or on a PDE-constrained optimization method \cite{BPSa,BPSc,BGPS}, allowing completely non conforming meshes at fracture intersections.

In the present paper we consider a hybridized formulation  of the coupled equations along the lines of \cite{Brezzi}. This is obtained by introducing the trace of the solution
on the interface as an independent unknown, and writing both the equation and the coupling conditions in weak form. On each fracture, a Darcy problem is independently solved, subject to a  constraint on the intersection with other fractures, forcing the trace of the solution to coincide with the unknown trace variable. The constraint is imposed by means of Lagrange multipliers, local to each fracture, and coupled to each other by the flux conservation property.  In the domain decomposition framework, this formulation has been shown to hold great potential. By allowing, in principle, for the use of independent discretizations for the subdomains and for the interface, it allows a great freedom in the design of the meshes, which can be constructed independently on each subdomain. Moreover, quasi optimal preconditioners are available  \cite{bertoluzza2000wavelet,bertoluzza2004substructuring}, allowing for the efficient solution of the resulting linear system.

To be able fully exploit the freedom to choose completely independent mesh in the different fractures, that in principle such a formulation  allows, it is convenient to resort to some kind of stabilization. Indeed, by  gaining some control on either the trace unknown or on the Lagrange multiplier, we manage to avoid the need of simultanously satisfying, at the discrete level, both the two compatibility conditions required for the stability of the problem at the continuous level.  Different stabilization approaches can be employed to gain such control.	Bubble stabilization has been proposed in \cite{brezzi1997stabilization,brezzi2001error,buffa2002error}, while a full stabilization, consisting in a number of different terms accounting for different possible sources of instability can be found in \cite{baiocchi1992stabilization}. The typical approach for such kind of problem would consist in the addition of mesh dependent penalization terms, similar to the stabilization terms appearing in Nitsche's method \cite{Nitsche1971uber} or in interior penalty methods (\cite{burman2005unified}), acting  on the discrepancy at the interface between the solution in the individual fractures and the additional trace variable, and/or on the discrepancy between the multiplier and the flux (as in the Barbosa-Hughes method \cite{Barbosa1992circumventing}).  
We point out that a straightforward application of the Barbosa-Hughes stabilization would require the mesh to be somhow conforming with the interface (that is, the interface needs to be a subset of the union of the edges of the mesh), a requirement that greatly limits the freedom in the mesh design.
Following the approach proposed in \cite{bertoluzza2022algebraic}, we resort instead here to a computable dual norm stabilization local to the individual fractures, of which we also present a mesh dependent version, obtained by simple algebraic manipulations. The latter can be considered the equivalent of the Barbosa-Hughes stabilization in the case where the mesh is not conforming to the interface.

The paper is organized as follows. In Sections \ref{sec:2} and \ref{sec:3} we respectively present the hybridized formulation and the proposed stabilized discretization, and state the first stability and convergence result. In Section \ref{sec:StabTerm} we discuss the construction of the natural norm stabilizing term, while in section \ref{sec:stab_meshdependent} we derive the new mesh dependent stabilization, by some algebraic manipulations on the stabilizing term, and we prove the corresponding stability and convergence results. In Section \ref{sec:numtests} we present some numerical experiment, confirming the validity of the theoretical results.

\section{Three fields formulation of the DFN model}\label{sec:2}

%
%

\newcommand{\bO}{\partial\Omega}
\newcommand{\bON}{\bO^N}
\newcommand{\bOD}{\bO^D}

Let $\bar \Omega = \cup_{i=1}^\NF \bar\F_i \subset \mathbb{R}^3$ denote a Discrete Fracture Network (DFN), union of planar  polygonal fractures $F_i$. We let $\partial \Omega$ denote the boundary of $\Omega$, which we assume to be split as $\bO = \bO^N \cup \bO^D$.
The intersections of fractures are called traces and denoted by $\T_m$, $m = 1,\cdots \NT$. On $\O$ we consider the following system of coupled partial differential equations for the hydraulic head $\u$: 
\begin{gather}
	- \nabla \cdot K_i \nabla \u^i = g^i, \qquad \text{ in }\Fi, \label{strong1}\\
	\u^i = \u^j\qquad \text{ on } \F_i \cap F_j,\label{strong1}\\
	\jump{K_i \nabla \u^i \cdot \nu^i} =  \jump{K_j \nabla \u^j \cdot \nu^j} \qquad \text{ on } \F_i \cap F_j,\label{strong3}\\
	\u^i = 0 \ \text{ on } \partial F_i \cap \bOD, \quad \text{ and } \quad K_i \nabla \u^i \cdot \nu^i = 0 \text { on } \partial F_i \cap \bON.\label{strong4}
	\end{gather}
In the above equation  $\jump{K_i \nabla u^i \cdot \nu^i}$ denotes the flux, that is, the jump of the conormal derivative of the hydraulic head $\u_i$ along the direction  parallel to $F_i$ and orthogonal to $ \F_i \cap F_j$. More precisely, $\jump{K_i \nabla u^i \cdot \nu^i} = K_i \nabla u^i_+ \cdot \nu^i_+ + K_i \nabla u^i_- \cdot \nu^i_- $, where we label with $+$ and $-$ the two sides of the trace $F_i \cap F_j$ and we let $\nu^i_+$ and $\nu^i_-$  are the normal directions parallel to $F_i$ and normal to $S_m$, respectively pointing outwards from the $+$ and the $-$ side.   For the sake of simplicity we assume that $K_i$ is symmetric and constant on each fracture.

\


We let $\Fractures = \{ \Fi, \ i=1,\cdots,N\}$ denote the set of all fractures, and $\Traces = \{S_m = F_{i_m} \cup F_{j_m}\}$ the set of all traces. Moreover we let
$\Fm \subset \Fractures$ denote the set of those fractures that share $S_m$ as a trace, and $\Ti$ the set of the traces lying  on the fracture $\F_i$.  We define the skeleton $\Sq = \cup_m S_m$ as the union 
of all  traces, and we let $\Sq_i = \Sq\cap {\F_i}$ denote its restriction to the fracture $F_i$. We let 
\[
	H^1_*(\Fi) = \{ \v \in H^1(\F_i):\ \v = 0\ \text{ on }\partial\Fi \cap \bOD \},\qquad  \qquad H^{-1}_*(\Fi) = (H^1_*(\F_i))',
\]
and we let the global $H^1_*(\O)$ be defined as
\[
H^1_*(\O) = \{
h: \u^i = h|_{\Fi} \in 	H^1_*(\Fi), \ \text{ and } \  \u^i = \u^j \text{ on } \Fi \cap \F_j
\}.
\]
%
%

We define  spaces $\Ph$, $\Ph^i$, $\L$ and $\L^i$ as
\[
\Ph = H_*^1(\O)|_\Sq, \qquad \Ph^i = H_*^1(\F^i)|_\Sqi, \qquad 
\L^i  =  (\Ph^i)',\qquad \L = \prod_i \L^i,\]
respectively endowed with the norms
\[
\| \ph \|_{\Ph} = \inf_{{v \in H_*^1(\O)}\atop
	{\ph = v|_\Sq}} \| v \|_{1,\O}, \qquad \| \ph \|_{\Ph^i} = \inf_{{v \in H^1_*(\F^i)}\atop
	{\ph = v|_{\F^i}}} \| v \|_{1,\F^i}, \qquad \| \l \|_{\L^i} = \sup_{\ph \in \Ph^i} \frac{\int_{\Sqi} \l \ph   }{\| \ph \|_{\Ph^i}},
\]
$\L$ being endowed with the product norm. Remark that the space $\Ph$ is single valued on the skeleton, while the space $\L$ is multi-valued. 

\

In this paper we limit ourselves to present our approach under the simplifying assumptions that the traces are not only all disjoint from each other, but that they are also well separated. More precisely, we assume that there exists a constant $\gsep$ such that for all $\T, \T' \in \Traces$, $\T \not = \T'$,  
\begin{equation}\label{separation}
	\min_{x\in \T, y\in \T'} | x - y | \geq \gsep.
\end{equation}

Under this assumption, the traces $S_m$  can be handled independently one from the other, which allows to characterize $\Ph_i$ and $\L_i$ as
\begin{equation*}
	\Ph_i = \prod_{m\in\Ti} \Humi(S_m), \qquad \L_i = \prod_{m \in \Ti}\Hmumi(S_m),
\end{equation*}
endowed with the corresponding product norm, where $\Humi(S_m) \subseteq H^{1/2}(S_m)$ is defined as 
\[
\Humi(S_m) = H^1_*(F_i)|_{S_m}.
\]

\

In order to write down our discretization method, we start from the following continuous formulation of the DFN problem
\begin{pb}\label{pb:3fcont}
	 Find $\u = (\u^i)_i \in V = \prod_i H^1_*(\Fi)$, $\l = (\l^i)_i \in \L = \prod_i \L^i$, $\ph \in \Ph$ such that 
	\begin{gather*}
		\int_{\Fi} K_i \nabla \u^i \cdot \nabla \v^i - \int_\Sqi \l^i \v^i = \int_{\Fi} g^i \v^i, \qquad \forall \v^i \in H^1_*(\Fi), \ \forall i,\\
		\int_\Sqi \u^i \m^i - \int_\Sqi \ph \m^i = 0, \qquad \forall \m^i \in \L^i, \ \forall i,\\
		\sumi  \int_{\Sqi} \l^i \ps = 0, \qquad \forall \ps \in \Ph.
		\end{gather*}
		\end{pb}
		It is not difficult to ascertain that Problem \ref{pb:3fcont} is equivalent to (\ref{strong1}--\ref{strong4}).

\

\section{Discretization}\label{sec:3}
We now let $\Th^i$ and $\TSq^m$ respectively denote quasi uniform shape regular meshes of $\Fi \in \Fractures$ and $S_m \in \Traces$, with mesh size $\h_i$ and $\h_m$. 
For the sake of computational efficiency,  we would like the meshes $\Th^i$ to be constructed independently  of each other,  and  of the meshes $\TSq^m$. Therefore, we do not assume that the different meshes are matching at the interface.  For the sake of simplicity, we however assume that the mesh size of all the different meshes are comparable, that is that $\h_i \simeq \h_m \simeq \h$\footnote{
In the following we will employ the notation $A \lesssim B$ (resp. $A \gtrsim B$) to indicate that the quantity $A$ is less (resp. greater) or equal than the quantity $B$ times a constant independend of the mesh size, but possibly depending on the shape regularity of the mesh. 
}.

We introduce independent local order one discretization spaces $V_\h^i \subset H^1_*(\Fi)$, defined as 
	\begin{equation}
			\label{defVh}	V^i_\h = \{\uh \in H^1_*(\Fi): \uh \in \mathbb{P}_1(T), \ \forall T \in \Th^i \},
	\end{equation}
and local discretization spaces  $\Ph^m_\h \subset \Ph^m$ for $\ph$ on each trace $S_m \subseteq \Sq$, defined as
\begin{equation}
		\Phh^m = \{ \ph \in C^0(S_m): \ \ph \in \mathbb{P}_1(e),\ \forall e \subseteq \TSq^m, \ \ph = 0\ \text{on}\ \bar S_m \cap \bOD\}.\label{defPh}
\end{equation}
Moreover,  for all traces $S_m \in \Traces$, $\Fi \in \Fm$ , we define local discretization spaces $\Lh^{i,m} \subset \Hmumi(S_m) = (\Humi(S_m))'$ for the different multipliers $\l^i$ as
	\begin{equation}
			\label{eq:defLh}	\Lh^{i,m} = \Phh^m.
	\end{equation}

We then assemble the global discrete spaces as
\[\underline{}
V_\h = \prod_{\Fi \in \Fractures} V_\h^i,\quad \L_\h = \prod_{F_i \in \Fractures} \L_\h^i = \prod_{F_i \in \Fractures} (\prod_{S_m\in \Ti} \L_\h^{i,m}), \quad
\Ph_\h = \prod_{S_m \in \Traces} \Ph_\h^m.
\]

\begin{rem}
	We remark that, by taking $\L_\h^{i,m} = \Ph_\h^m$ we introduce an error of order $\h^{1/2} \log(\h)$ in our discretization scheme, whenever one or both the extrema of $S_m$ lie on the Dirichlet boundary $\bOD$, as $\L_\h^{i,m}$ needs to approximate the flux $\jump{K_i\nabla \u^i}$, which does not generally vanish on $\bOD$. This could be avoided by constructing $\L_\h^{i,m}$ with the approach given by the mortar method. However, the solution of our problem typically presents jumps in the conormal derivative along traces, which also yield an error of order $\h^{1/2}\log(\h)$, so that our choice does not degrade the order of convergence of the overall scheme. It will instead allow, later on, for some simplification that will allow to  define, in Section \ref{sec:stab_meshdependent}, a particularly simple and effective mesh dependent stabilization strategy.
\end{rem}

A straightforward discretization of Problem \ref{pb:3fcont} yields the following discrete problem: find $\uh = (\uh^i) \in V_\h$, $\lh = (\lh^i)_i \in \Lh$, $\phh \in \Ph_\h$ such that  
\begin{gather}
	\int_{\Fi} K_i \nabla \uh^i \cdot \nabla \vh^i - \int_\Sqi \lhi  \vh^i = \int_{\Fi} g^i \vh^i, \qquad \forall \vh^i \in V_\h^i, \ \forall i,  \label{nonstab1}\\
\int_\Sqi \uh^i \mh^i - \int_\Sqi \phh \mh^i = 0, \qquad \forall \mh^i \in \Lh^i, \ \forall i, \label{nonstab2}\\
\sumi  \int_{\Sqi} \lhi  \psh = 0, \qquad \forall \psh \in \Phh.\label{nonstab3}
\end{gather}
It is well known that the well posedness of the above problem relies on the  validity of two inf-sup conditions, which might be quite difficult to satisfy simultaneously. On the other hand, the mesh $\TSq$ is shape regular, our choice of discretization implies, by some standard argument, that
\begin{equation}\label{infsupbase}
	\inf_{\phh \in \Phh} \sup_{\lh \in \Lh} \frac{ \sumi  \int_{\Sqi} \phh \lh }{
		\| \phh \|_{\Ph} \| \lh \|_{\L}
	} \gtrsim 1,
\end{equation}
\NOTE{In questo caso il proiettore di Fortin \`e la proiezione $L^2$ che è limitata in $H^1$ o in $H^1_0$ per combinazione di disuguaglianze dirette e inverse.}

In general, condition \eqref{infsupbase} is not sufficient for the stability of the discretized problem, and, consequently, we need to add some stabilization. We propose to add independent  stabilization terms  to  the local equations \eqref{nonstab1}--\eqref{nonstab2} in the unknowns $\uh^i$, $\lhi $ on the individual fractures,
 by following the general approach proposed in \cite{bertoluzza2022algebraic}. To this aim we introduce  the linear operators corresponding to the bilinear forms involved in the continuous formulation, namely we set
\begin{gather*}
	A_i: H^1_* (\Fi) \to H^{-1}_* (\Fi), \qquad \langle A_i \u, \v \rangle = \int_\Fi K_i \nabla \u \cdot \nabla \v,\\
	B_{i,m}: H^1_* (\Fi) \to \Hmumi(S_m), \qquad B_{i,m} = \gamma^m_i  \vh,\\
	B_i: H^1_*(\Fi) \to \L'_i, \qquad \langle B_i v, \l^i \rangle = \sum_{m \in \Ti}  \langle \l^ {i,m}, \gamma_i^m v \rangle_{S_m}
\end{gather*}
where, for $S_m \in \Ti$,  $\gamma^m_i : H^1_* (\Fi) \to \Humi(S_m)$ is the trace operator.  We let 
$B_{i,m}^T: \Hmumi (S_m) \to H^{-1}_* (\Fi)$ denote the adjoint of $B_{i,m}$, which coincides with the canonical injection of $\Hmumi(S_m)$ in $H^{-1}_*(\Fi)$. Letting $\calV_i = H^1_*(\Fi) \times \L_i$, and letting \(\calA_i : \calV_i \times \calV_i \to \mathbb{R}\) be defined as
\[
\calA_i(\u,\l;\v,\m) = \langle A_i \u, \v \rangle - \langle B_i \v, \l \rangle + \langle B_i \u, \m \rangle,
\]
we can rewrite equations \eqref{nonstab1}-\eqref{nonstab2} in compact form as: look for $(\u^i,\l^i) \in \calV_i$ such that for all $(\v^i,\m^i) \in \calV_i$ it holds
\begin{equation}\label{pb_cont_compact}
\calA_i(\u^i,\l^i;\v^i,\m^i) = \int_{\F_i} g^i \v^i + \int_{\Sqi} \ph \m^i.
\end{equation}
 The ideal residual based  stabilization as proposed by \cite{bertoluzza2022algebraic} consists in replacing, already at the continuous level, problem \eqref{pb_cont_compact} with
  the equivalent problem
 \begin{multline}
\calA_i(\u^i,\l^i;\v^i,\m^i) + \stabweight [\, A_i \uh^i  - \summi  B_{i,m}^T \lhim , t A_i \vh^i - \summi  B_{i,m}^T \mh^{i,m} ]_{H^{-1}_*(\Fi)} 
\\ = \int_{\F_i} g^i \v^i + \int_{\Sqi} \ph \m^i + \stabweight  [\, g^i , t A_i \vh^i - \summi  B_{i,m}^T \mh^{i,m} ]_{H^{-1}_*(\Fi)}.
\end{multline}
 where $\stabweight > 0$ is a (mesh independent) weight that must be less or equal than a constant $\stabweight_0$, and where $[\cdot,\cdot]_{H^{-1}_*(F_i)}$ denotes the scalar product for the space $H^{-1}_*(\Fi)$.
 We can write the stabilized problem in compact form as
  \[
 \calA_i(\u^i,\l^i;\v^i,\m^i) + \stabweight \calS_i(\u^i,\l^i;\v^i,\m^i)= \int_{\F_i} g^i \v^i + \int_{\Sqi} \ph \m^i + \stabweight \calG_i(\v^i,\m^i),
 \]
 where
\[
\calS_i(\u,\l;\v,\m) = [\, A_i \uh^i  - \summi  B_{i,m}^T \lhim , t A_i (\wh^i) - \summi  B_{i,m}^T \mh^{i,m} ]_{H^{-1}_*(\Fi)}, 
\]
and
\[
\calG_i(\v,\m) = [\, g^i , t A_i (\wh^i) - \summi  B_{i,m}^T \mh^{i,m} ]_{H^{-1}_*(\Fi)}.
\]
Different values of the parameter $t$, which can a priori be any real number but typically assumes values in $\{-1,0,1\}$, result in different versions of the stabilization.

Following the approach of \cite{bertoluzza2022algebraic}, at the discrete level, the $H^{-1}_*(\Fi)$ scalar product involved in the definition of the stabilizing term $\calS_i$ and $\calG_i$ is replaced by a projected $H^{-1}_*(\Fi)$ scalar product, realized algebraically with the aid of suitable auxiliary spaces.
More precisely, to realize the stabilization we need, for each fracture $\Fi$, an auxiliary subspace $\Wauxi \subset H^1_*(\Fi)$ such that
\begin{equation}\label{infsupaux}
\inf_{\lh \in \Lh^i} \sup_{\wh \in (V_\h^i + \Wauxi )} \frac{
\int_\Sqi \lh \wh 
}{\| \lh \|_{\L^i} \| \wh \|_{1,\Fi} } \gtrsim 1.
\end{equation}

We let $\{\auxb_k, \  k = 1, \cdots, N \}$ denote a basis for the auxiliary space $\Wauxi$ and we introduce the matrices $\matR$ and $\matS$ defined as
\begin{gather*}
	\matR = (r_{k\ell}),\ 	\matS = (s_{k\ell}), \qquad r_{k\ell} =\int_{\Fi} \nabla \auxb_\ell \cdot \nabla \auxb_k, \qquad \matS = \matR^{-1}.
\end{gather*}
The stabilization recipe proposed in  \cite{bertoluzza2022algebraic} consists in replacing the dual scalar product for $H^{-1}_*(\Fi)$ with a discrete scalar product of the form
\begin{gather}\label{abstractdualnorm}
	[f,g]_{-1,\Fi} = \sum_{\ell,k} s_{k\ell} \langle f, \auxb_\ell \rangle \langle g,\auxb_k \rangle,\qquad f,g \in H^{-1}_*(\Fi).
\end{gather}
The stabilizing terms are then defined as
\begin{gather}\label{defstabilizingterm}
	\calSnew_i(\u,\l;\v,\m) = [\, A_i \uh^i  - \summi  B_{i,m}^T \lhim , t A_i (\wh^i) - \summi  B_{i,m}^T \mh^{i,m} ]_{-1,\Fi}, \\ \calGnew_i(\v,\m) = [\, g^i , t A_i (\wh^i) - \summi  B_{i,m}^T \mh^{i,m} ]_{-1,\Fi}.
\end{gather}

The global discrete problem is 
\begin{pb}\label{pb:discrete-stabilized}
	Find $\uh = (\uh^i) \in V_\h$, $\lh = (\lhi ) \in \Lh$, $\phh \in \Ph_\h$ such that 
	\begin{multline}\label{stab1e2}
	\calA_i(\u^i,\l^i;\v^i,\m^i) + \stabweight \calSnew_i(\u^i,\l^i;\v^i,\m^i)\\= \int_{\F_i} g^i \v^i + \int_{\Sqi} \ph \m^i + \stabweight \calGnew_i(\v^i,\m^i), \qquad \forall (\vh^i,\lhi ) \in \calV_\h^i, \ \forall i,
\end{multline}
and 
\begin{gather}
		\sumi  \int_{\Sqi} \lhi  \psh = 0, \qquad \forall \psh \in \Phh.\label{stab3}
	\end{gather}
\end{pb}

\

We have the following theorem, which can be proved by essentially the same argument as for the proof of \cite[Theorem 3.1]{bertoluzza2022algebraic}.

\begin{thm}\label{thm:main}
Assume that the auxiliary spaces $\Wauxi$ satisfy \eqref{infsupaux}, as well as a Poincar\'e inequality of the form
\[
\| w \|_{0,\F_i} \lesssim | w |_{1,\F_i} \qquad \text{ for all } w \in \Wauxi.
\]
Then, there exists a constant $\stabweight_0 > 0$ such that, provided $\stabweight \leq \stabweight_0$, Problem \ref{pb:discrete-stabilized} is well posed. Moreover, the following error estimate holds
\[
\sumi  \| \u^i - \uh^i \|_{1,\F_i}^2 \lesssim
\inf_{\vh \in V_h} \sumi  \| \u^i - \vh^i \|_{1,\F_i}^2 + \inf_{\mh \in \Lh}  \| \l^i - \mh^i \|_{\L}^2 + \inf_{\psh \in \Phh} \| \ph - \psh \|_{\Ph}^2.
\]
\end{thm}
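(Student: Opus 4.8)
The plan is to establish well-posedness of Problem~\ref{pb:discrete-stabilized} via an inf-sup (stability) argument on the full discrete system, and then obtain the error estimate by the standard C\'ea-type machinery. The theorem statement itself signals that the proof mirrors \cite[Theorem 3.1]{bertoluzza2022algebraic}, so the work consists of verifying that the present setting meets the hypotheses of that abstract result. First I would introduce the combined bilinear form on the product space $V_\h \times \Lh \times \Phh$, namely the sum over fractures of $\calA_i + \stabweight \calSnew_i$ together with the coupling term $\sumi \int_{\Sqi} \lhi \psh$ from \eqref{stab3}. The goal is to prove a global inf-sup condition for this form with a constant independent of $\h$.

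The heart of the matter is the \emph{local} stability on each fracture, i.e.\ the coercivity-up-to-stabilization of $\calA_i + \stabweight \calSnew_i$ on $\calV_\h^i = V_\h^i \times \Lh^i$. Here the two hypotheses are used in tandem. The Poincar\'e inequality on $\Wauxi$ guarantees that the discrete dual scalar product $[\cdot,\cdot]_{-1,\Fi}$ defined in \eqref{abstractdualnorm} is genuinely equivalent, on the relevant finite-dimensional range, to the true $H^{-1}_*(\Fi)$ norm; this is what makes the projected scalar product a legitimate surrogate. The auxiliary inf-sup condition \eqref{infsupaux} is then what lets the stabilizing term $\calSnew_i$ recover control of the multiplier norm $\| \lhi \|_{\L^i}$: testing against $\wh \in V_\h^i + \Wauxi$ realizing the supremum in \eqref{infsupaux} converts the dual-norm penalty on $B_{i,m}^T \lhim$ into a lower bound by $\| \lhi \|_{\L^i}^2$. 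Combining this with the coercivity of $A_i$ on the primal variable (via Poincar\'e on $V_\h^i$) yields, for a suitable choice of test functions and provided $\stabweight \le \stabweight_0$ so that the cross terms are absorbed, a local inf-sup bound controlling $\| \uh^i \|_{1,\Fi} + \| \lhi \|_{\L^i}$. I expect the delicate point to be the choice of $\stabweight_0$: one must track the constants coming from \eqref{infsupaux}, from the norm equivalence of $[\cdot,\cdot]_{-1,\Fi}$, and from Young's inequality on the mixed terms, and show they can be balanced so the stabilization is strong enough to give control of $\lhi$ but weak enough not to destroy consistency.

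Once local stability is in hand, the global inf-sup follows by complementing the local estimates with the base inf-sup condition \eqref{infsupbase}, which supplies control of $\| \phh \|_{\Ph}$ through the coupling equation \eqref{stab3}: given $\phh$, one selects a multiplier $\lh \in \Lh$ realizing \eqref{infsupbase} and uses it as a test function in \eqref{stab3} to bound $\| \phh \|_{\Ph}$ by the residual, the local stability then handling $\uh$ and $\lh$. The well-separation assumption \eqref{separation} is what decouples the traces and makes the whole argument reduce to independent per-fracture, per-trace pieces. Well-posedness of the square linear system then follows from the inf-sup bound by the usual finite-dimensional argument.

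For the error estimate I would proceed by Strang's lemma. The key observation is consistency: the ideal stabilization was introduced already at the continuous level in \eqref{pb_cont_compact}ff.\ as an \emph{equivalent} problem, so the exact solution $(\u,\l,\ph)$ satisfies the stabilized equations with the true $H^{-1}_*(\Fi)$ scalar product and zero residual. The only inconsistency is the replacement of $[\cdot,\cdot]_{H^{-1}_*(\Fi)}$ by the projected $[\cdot,\cdot]_{-1,\Fi}$; I would bound this consistency error by the norm equivalence guaranteed by the Poincar\'e hypothesis, showing it is dominated by the best-approximation quantities. Writing $\u^i - \uh^i = (\u^i - \vh^i) + (\vh^i - \uh^i)$ for arbitrary $\vh \in V_\h$, estimating the discrete part $\vh^i - \uh^i$ by the global inf-sup applied to the residual, and bounding that residual by continuity in terms of the interpolation errors $\| \u^i - \vh^i \|_{1,\Fi}$, $\| \l^i - \mh^i \|_{\L}$, and $\| \ph - \psh \|_{\Ph}$, then taking the infimum over the discrete trial functions, yields the stated estimate. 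The main obstacle throughout remains the careful constant-tracking in the local stability step that fixes $\stabweight_0$; the global assembly and the Strang argument are, by contrast, structurally routine given the abstract template of \cite{bertoluzza2022algebraic}.
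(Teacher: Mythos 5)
Your proposal has a genuine gap in the local stability step, and it is precisely the point where the paper's proof does something you do not. You invoke ``the coercivity of $A_i$ on the primal variable (via Poincar\'e on $V_\h^i$)''. This coercivity does not hold: for an interior fracture $F_i$ with $\partial F_i \cap \bO^D = \emptyset$, the space $H^1_*(\Fi)$ is all of $H^1(\Fi)$, so $V_\h^i$ contains the constants and $A_i$ vanishes on them. No per-fracture argument can fix this; the control of fracture-wise constants comes only from the coupling across traces. The paper's proof supplies exactly this missing ingredient: it rewrites Problem \ref{pb:discrete-stabilized} as an equivalent expanded saddle point problem with the auxiliary unknown $\huh \in \Waux$ made explicit (static condensation of $\huh$ recovers the stabilized scheme), reduces via \eqref{infsupbase} to the kernel $\KerB$ of the flux-balance constraint, and then proves ``ellipticity on the kernel'' by showing that any $(\uh,\huh)$ whose traces are orthogonal to $\KerB$ satisfies a \emph{network-wide} Poincar\'e inequality \eqref{poincare} --- the key observation being that fracture-wise constants obeying the constraint \eqref{ortkerb} are continuous across traces, hence vanish because the Dirichlet condition holds somewhere on $\bO^D$. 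Without this global step your local inf-sup claim is simply false for any network containing a fracture that does not touch the Dirichlet boundary.

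A second, related flaw: you assert that the Poincar\'e hypothesis on $\Wauxi$ makes the projected product $[\cdot,\cdot]_{-1,\Fi}$ ``genuinely equivalent'' to the true $H^{-1}_*(\Fi)$ scalar product. It does not; $[f,f]_{-1,\Fi}$ is only bounded above by $\|f\|_{H^{-1}_*(\Fi)}^2$ and vanishes on every functional annihilating $\Wauxi$, so no equivalence can hold --- compensating for this degeneracy is the entire purpose of \eqref{infsupaux}. The paper never needs such an equivalence: in the expanded formulation, \eqref{infsupaux} serves as the Brezzi inf-sup condition for the multiplier $\lh$ against the enlarged primal space $V_\h + \Waux$, while the assumed Poincar\'e inequality on $\Wauxi$ is used only to control the auxiliary variable $\huh$ and absorb the cross term $\int_{\Fi}\nabla\uh\cdot\nabla\huh$ for $\stabweight$ small. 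Finally, your Strang-lemma worry about a consistency error is moot (the exact residual $A_i\u^i - \sum_m B_{i,m}^T\l^{i,m} - g^i$ is zero as a functional, so the projected product introduces no inconsistency); the paper instead gets the error estimate from standard saddle point theory after noting that $(\u,\widehat\u=0,\l,\ph)$ solves the expanded system exactly.
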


\newcommand{\KerB}{\mathcal{K}}
\newcommand{\Waux}{W_\h}
\begin{proof}
	We consider the following 
	auxiliary problem, equivalent to Problem \ref{pb:discrete-stabilized}: find $\u_\h = (\u^i_\h )_i \in V_\h, \huh = (\huh^i)_i \in \Waux = \prod_i \Wauxi, \lh =(\lhi)_i \in \Lh$, $\phh \in \Phh$ such that for all $\vh = (\vh^i)_i \in V_\h, \wh = (\huh^i)_i \in \Waux, \lh =(\lhi)_i \in \Lh$, $\psh \in \Phh$ it holds that
	\begin{gather}
\label{pbexp1}	\sumi 	\int_{\Fi} K_i \nabla \uh^i \cdot \nabla \vh^i - \sumi  \int_\Sqi \lhi  \vh^i = \sumi   \int_{\Fi} g^i \vh^i,  \\[1mm]
\label{pbexp2}	\sumi 	\int_{\Fi} K_i \nabla \uh^i \cdot \nabla \wh^i  + \frac 1 \stabweight \sumi  \int_{\Fi} \nabla \huh^i \cdot \nabla \wh^i - \sumi  \int_{\Sqi} \lhi  \wh^i =\sumi   \int_{\Fi} g^i \wh^i, \\[2mm]
\label{pbexp3}	\sumi 	\int_\Sqi (\uh^i + \huh^i) \mh^i = \sumi  \int_\Sqi \phh \mh^i, \\
	\sumi  \int_{\Sqi} \lhi  \psh = 0.\label{pbexp4}
	\end{gather}
It is not difficult to see that the solution of Problem \ref{pb:discrete-stabilized} can be obtained from the solution of (\ref{pbexp1}--\ref{pbexp4}) by static condensation of the auxiliary unknown $\huh$. Then, well posedness of the latter implies well posedness of the former.
	Thanks to \eqref{infsupbase}, in order to prove well posedness of  (\ref{pbexp1}--\ref{pbexp4})  it is sufficient to prove the well posedness of  (\ref{pbexp1}--\ref{pbexp3}) in the set $V_\h \times \Waux \times \KerB$  with
	\[
	\KerB = \{ (\lh^i)_i \in \Lh:
	\ \sumi  \int_{\Sqi} \l_h^i \psh = 0 \ \forall \psh \in \Phh
	\}.
	\]
	This is, in turn, also  a saddle point problem, where the primal space is $V_\h\times \Waux$ and the multiplier space is $\Lh$. By assumption, we have that the discrete spaces satisfy the inf-sup condition \eqref{infsupaux}. We then only need to prove the following ``ellipticity on the kernel'' property: for $(\uh, \huh)$ with 
	\begin{equation}\label{kernel}
		\sumi  \int_{\Sqi} (\uh^i + \huh^i) \mh^i = 0, \qquad \forall \mh = (\mh^i)_i \in \KerB 
	\end{equation}
	it holds that
	\[
	\int_{\Fi} K_i \nabla \uh^i \cdot \nabla (\uh^i + \huh^i) + \frac 1 \stabweight  \int_\Fi | \nabla \huh^i |^2 \gtrsim \| \uh^i \|_{1,\Fi}^2 + \| \huh^i \|_{1,\Fi}^2.
	\]
	
	To prove this, we start by observing that for all $(\v^i)_i \in \prod_i H^1_*(\F_i)$ with 
\begin{equation}\label{ortkerb}
\sumi  \int_{\Sqi} \u^i \l_\h^i = 0, \qquad \forall \lh = (\l^i_\h)_i  \in \KerB
\end{equation}
it holds that 
\begin{equation}\label{poincare}
\sumi  \| \u^i \|_{0,\F_i}^2 \lesssim \sumi  | \u^i |_{1,\F_i}^2.
\end{equation}
Indeed, \eqref{ortkerb} implies that functions which are constant on each fracture, are continuous, which, thanks to the Dirichlet boundary conditions, implies that they vanish.


Then, for $\u_\h,\huh$ satisfying \eqref{ortkerb}
 we have that
\[
\sumi  \| \uh + \huh \|^2_{0,\Fi} \lesssim \sumi  | \uh + \huh |^2_{1,\Fi} 
\]
Moreover, by the Poincar\'e  assumption on the space $\Wauxi$, we have that 
\[
\| \huh \|_{0,\Fi} \lesssim | \huh |_{1,\Fi},
\]
which implies that
\begin{multline*}
\sumi  \| \uh \|^2_{0,\Fi} \lesssim \sumi  \| \uh + \huh \|^2_{0,\Fi} + \sumi  \| \huh \|^2_{0,\Fi}\\
 \lesssim \sumi  | \uh + \huh |^2_{1,\Fi} + \sumi  | \huh |^2_{1,\Fi}  \lesssim \sumi  | \uh |^2_{1,\Fi}  + \sumi  | \huh |^2_{1,\Fi}. 
\end{multline*}
Now we have 
\begin{multline*}
\int_{\Fi} K_i \nabla \uh^i \cdot \nabla (\uh^i + \huh^i) + \frac 1 \stabweight  \int_\Fi | \nabla \huh^i |^2 \\
\geq | \sqrt{K_i} \uh^i |_{1,\Fi}^2 - \left| \int_\Fi \nabla \uh^i \cdot \nabla \huh^i \right| + \frac 1 \stabweight  | \huh^i |_{1,\Fi}^2 \\
\geq  | \sqrt{K_i} \uh^i |_{1,\Fi}^2 + \frac 1 \stabweight  | \huh^i |_{1,\Fi}^2 - \frac 1 2 | \sqrt{K_i} \uh^i |_{1,\Fi}^2 - C(K_i)  | \huh^i |_{1,\Fi}^2.
\end{multline*}
Then, if $\omega$ is sufficiently small, we get well posedness. The error estimate can be obtained by the  standard theory for saddle point problems, after observing that $\u$, $\widehat \u = 0$, $\l$ and $\ph $ satisfy (\ref{pbexp1}--\ref{pbexp4}).
\end{proof}

As we already mentioned, the solution of problem \eqref{pb:3fcont} typically satisfies $u^i \in H^{3/2-\varepsilon}(\F_i)$, $u^i \not \in H^{3/2}(F_i)$. Then we have the following Corollary
\begin{cor}
	For al $\varepsilon > 0$ it holds that
\[
\sumi  \| \u - \uh^i \|_{1,\F_i}^2 \lesssim C_\varepsilon
\h^{1/2-\varepsilon}.
\]
\end{cor}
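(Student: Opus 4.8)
The plan is to obtain the corollary as a direct consequence of the C\'ea-type error estimate of Theorem~\ref{thm:main}: once the discrete problem is known to be well posed and quasi-optimal, it only remains to bound the three best-approximation terms on the right-hand side of that estimate by standard $\mathbb{P}_1$ approximation theory together with the assumed regularity $\u^i \in H^{3/2-\varepsilon}(\Fi)$. No further analysis of the discrete operators is needed; the whole argument reduces to approximation estimates on the fractures $\Fi$, on the traces $S_m$, and in the dual norm of $\L$.

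First I would bound the primal term $\inf_{\vh \in V_h} \sumi \| \u^i - \vh^i \|_{1,\Fi}^2$. Taking $\vh^i$ to be a quasi-interpolant (Scott--Zhang or Cl\'ement) of $\u^i$ in $V_\h^i$ and invoking the interpolation estimate for $H^s$ functions with $s = 3/2-\varepsilon$ on a shape-regular mesh gives $\| \u^i - \vh^i \|_{1,\Fi} \lesssim \h^{1/2-\varepsilon}$, so that this term is $\lesssim \h^{1-2\varepsilon}$. For the trace term $\inf_{\psh \in \Phh} \| \ph - \psh \|_\Ph^2$, I would use that $\ph = \u|_\Sq$ inherits, on each $S_m$, the trace regularity $H^{1-\varepsilon}(S_m)$ from $\u^i \in H^{3/2-\varepsilon}(\Fi)$; approximating in $\Phh^m$ and measuring the error in the $H^{1/2}$-type norm of $\Ph$ yields again the rate $\h^{1/2-\varepsilon}$, hence $\lesssim \h^{1-2\varepsilon}$ after squaring.

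The delicate step, and the main obstacle, is the multiplier term $\inf_{\mh \in \Lh} \| \l^i - \mh^i \|_\L^2$, which lives in the dual ($H^{-1/2}$-type) norm of $\L$. Two features complicate it: the flux $\l^i = \jump{K_i \nabla \u^i \cdot \nu^i}$ has limited regularity along $S_m$ (it typically jumps along the trace and does not vanish where $S_m$ meets $\bOD$), and, as already observed, the choice $\Lh^{i,m} = \Phh^m$ forces the discrete multiplier to vanish at the Dirichlet endpoints while the flux does not. I would estimate the error by duality, pairing $\l^i - \mh^i$ against $\ph \in \Ph^i$ and exploiting the $L^2$-stability and inverse properties of the piecewise-linear projection; both effects then produce a best-approximation error of order $\h^{1/2}|\log\h|$ in the $\L$-norm, so $\h\,|\log\h|^2$ after squaring.

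Finally I would add the three contributions. Since $\h^{1-2\varepsilon} \lesssim \h^{1/2-\varepsilon}$ and $\h\,|\log\h|^2 \lesssim C_\varepsilon \h^{1/2-\varepsilon}$ for every $\varepsilon>0$ --- the logarithm being absorbed into an arbitrarily small power of $\h$ --- the right-hand side of Theorem~\ref{thm:main} is bounded by $C_\varepsilon \h^{1/2-\varepsilon}$, which is exactly the claim. The only genuinely technical point is the dual-norm estimate of the flux approximation error, from which both the logarithmic loss and the restriction to $\varepsilon>0$ originate.
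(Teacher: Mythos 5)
Your proposal is correct and follows essentially the same route as the paper: there the corollary is stated without a separate proof, as an immediate consequence of Theorem \ref{thm:main} combined with the regularity $\u^i \in H^{3/2-\varepsilon}(\F_i)$ and standard approximation estimates (including the $\h^{1/2}\log\h$ multiplier bound already anticipated in the paper's remark on the choice $\Lh^{i,m}=\Phh^m$). Your fleshed-out version of the three best-approximation bounds is exactly this implicit argument, and in fact yields the slightly stronger rate $\h^{1-2\varepsilon}$ (up to logarithmic factors) for the squared norm, which dominates the stated bound $C_\varepsilon\,\h^{1/2-\varepsilon}$.
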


\NOTE{$ f \in W^{1,p} $, $ p > N$ continuous, $f \in W^{1,p}$, $ p < N$ discontinuous.
}

\begin{rem}
	The approach that we present can be applied  to other discretizations, including higher order ones. However, as the solution of the problem we consider is generally not regular, as it presents jumps in the conormal component of the gradient along traces, we cannot generally expect a convergence of order greater than one half, an order one discretization is in this framework sufficient. We also observe that, as far as the choice of the multiplier space $\Lh$ is concerned, it would also possible to resort to discontinuous piecewise constant discretization on a dual mesh to the one underlying the space. 
\end{rem}

\newcommand{\Fim}{F_{i,m}}
\newcommand{\dist}{\mathrm{dist}}

\section{Construction of the stabilizing term}
\label{sec:StabTerm}

We focus now on the concrete construction of the stabilizing bilinear forms $\calS_i$, the first step of which is the choice of the auxiliary spaces.
Assumption \eqref{separation} allows us to define $\Wauxi$ as the direct sum of subspaces $\Wauxim$ supported away from each other, each in a neighborhood of the corresponding trace $S_m$.  The double sum $\sum_{\ell,k}$ in \eqref{abstractdualnorm} can then be split as the $\sum_{S_m \in \Ti} \sum_{\ell,k}$ where, once $m$ is fixed, $\ell$ and $k$ are indexes of the basis functions for the local auxiliary space $\Wauxim$:
\[
	[f,g]_{-1,\Fi}  = \sum_{S_m \in \Ti} \sum_{\ell,k} s_{k,\ell} \langle f, \auxb^m_\ell \rangle \langle g,\auxb^m_k \rangle =  \sum_{S_m \in \Ti} [f,g]_{-1,i,m},
\]
where the functions $\auxb_\ell^m$, $\ell =1, \cdots, \dim(\Wauxim)$, form a basis for $\Wauxim$. Remark that the resulting matrices $\matR$ and $\matS = \matR^{-1}$ are block diagonal, with  blocks $\matS^m$ and $\matR^m$  corresponding to the trace $S_m$).

\

In order to construct the  spaces $\Wauxim$ so that  the inf-sup condition \eqref{infsupaux} is satisfied, for each trace $S_m \in \Ti$ we introduce disjoint  subdomains $\Fim \subseteq \F_i$, each defined as the union of triangles $T$ of a local shape regular mesh $\Taux^{i,m}$, with meshsize $\gtrsim \h$.  We assume that the set of interior nodes of $\Taux^{i,m}$ coincides with the set of nodes of $\TSq^{i,m}$ which are interior to $\Fi$.  We let $\Wauxim$ be the space of continuous piecewise linears on $\Taux^{i,m}$, vanishing at all nodes which are not on $S_m$, as well as at nodes on the Dirichlet boundary $\bOD$, if any (see Figure \ref{fig:1}). Thanks to Assumption \eqref{separation} it is indeed possible to construct local auxiliary meshes $\Taux^{i,m}$ in such a way that the corresponding local subdomains are disjoint.

\begin{figure}
	\includegraphics*[height=6cm]{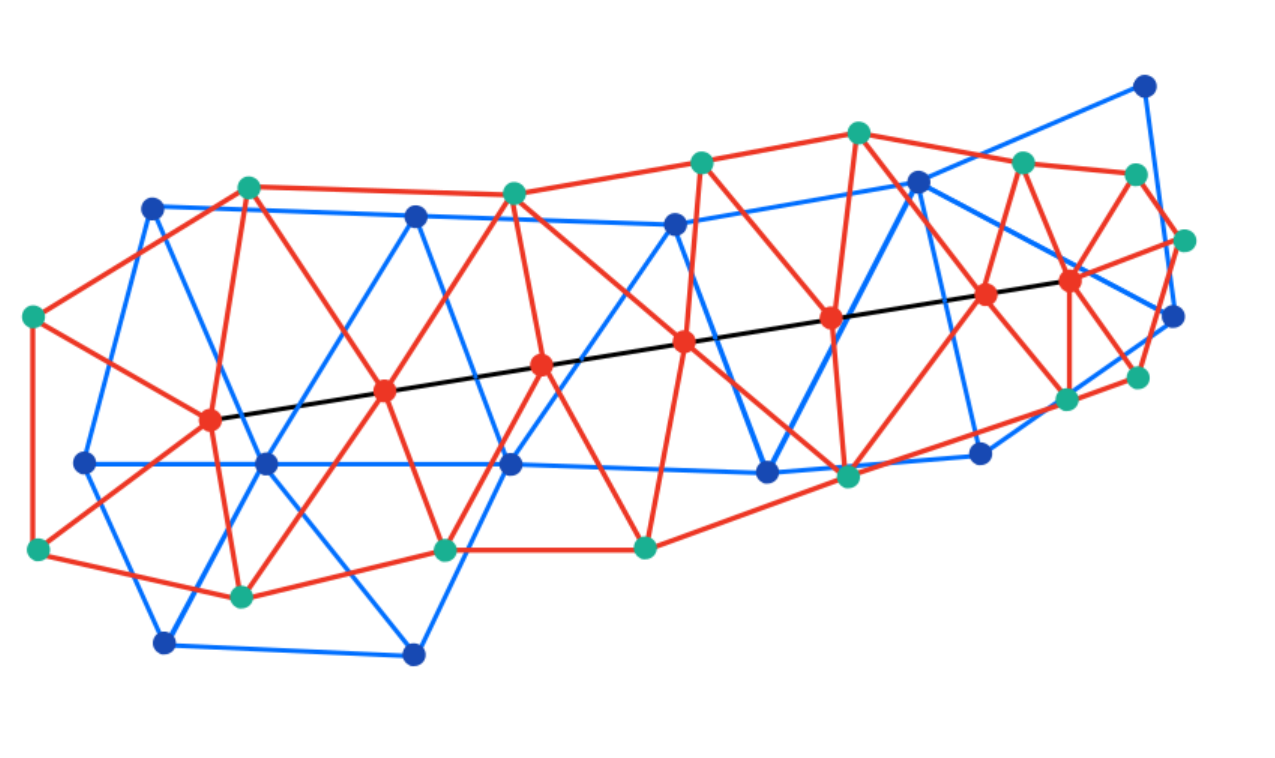}
	\caption{
		Triangulations $\Th$ (blue) and $\Taux$ (red). $\Fim$ is the union of the red triangles.
	The red dots mark the degrees of freedom for $\Wauxim$ (which coincide with the dofs for $\Lh^{i,m}$), while in the nodes marked with the green dots  $\wh = 0$. }\label{fig:1}
\end{figure}

Observe that, by definition, we have that a) $\Wauxim|_{S_m} = \Phh|_{S_m}$, and b)  for $m'\not= m$ the supports of functions in $\Wauxim$ and $W^{i,m'}_{\h}$ do no overlap.    We define the auxiliary space $\Wauxi$ as the direct sum of the local spaces $\Wauxim$.
We have the following lemma.

\newcommand{\lho}{\l_\h^0}
\newcommand{\lhperp}{\l_\h^\perp}
\newcommand{\Lho}{\L_\h^0}
\newcommand{\ho}{\h_0}

\begin{lem}
	The inf-sup condition \eqref{infsupaux} holds.
\end{lem}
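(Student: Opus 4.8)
The inf-sup condition to be established reads
\[
\inf_{\lh \in \Lh^i} \sup_{\wh \in (V_\h^i + \Wauxi )} \frac{
\int_\Sqi \lh \wh
}{\| \lh \|_{\L^i} \| \wh \|_{1,\Fi} } \gtrsim 1,
\]
so the plan is to construct, for an arbitrary $\lh = (\lh^{i,m})_{m\in\Ti} \in \Lh^i$, an explicit test function $\wh \in V_\h^i + \Wauxi$ realizing the supremum up to a constant. The key structural feature I would exploit is the trace separation assumption \eqref{separation} and property (b) above: since the local spaces $\Wauxim$ have pairwise disjoint supports, the problem decouples trace by trace. Thus it suffices to handle each $S_m$ separately and then sum, reducing the global inf-sup to a collection of independent local inf-sup conditions, one on each $S_m$, between $\Lh^{i,m}$ and the restriction of $V_\h^i + \Wauxim$ to a neighborhood of $S_m$.

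For the local estimate I would proceed as follows. Fix a trace $S_m$ and $\lh^{i,m} \in \Lh^{i,m} = \Phh^m$. By property (a), $\Wauxim|_{S_m} = \Phh^m|_{S_m}$, so the degrees of freedom of $\Wauxim$ on $S_m$ coincide with those of $\Lh^{i,m}$; this means $\Wauxim$ supplies, via its trace on $S_m$, exactly the finite element space dual to which $\lh^{i,m}$ is measured. The natural choice is to take $\wh$ to be the element of $\Wauxim$ whose trace on $S_m$ is a suitable discrete Riesz representative of $\lh^{i,m}$ with respect to the $\Humi(S_m)$ duality, extended by zero outside $\Fim$. The two ingredients I would invoke are: first, a standard discrete inf-sup (or equivalence of norms) on the matching finite element pair $\Phh^m \times \Phh^m$ on the shape-regular mesh $\TSq^m$, giving $\int_{S_m} \lh^{i,m} \wh \gtrsim \| \lh^{i,m} \|_{\Hmumi(S_m)} \| \wh|_{S_m} \|_{\Humi(S_m)}$; second, a trace/extension estimate controlling $\| \wh \|_{1,\F_i}$ by $\| \wh|_{S_m} \|_{\Humi(S_m)}$, which holds because $\wh$ is the finite element extension inside the shape-regular auxiliary mesh $\Taux^{i,m}$ of meshsize $\simeq \h$. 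Combining these yields the local bound with a constant depending only on shape regularity.

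The main obstacle I anticipate is the trace/extension estimate $\| \wh \|_{1,\F_i} \lesssim \| \wh|_{S_m} \|_{\Humi(S_m)}$ for discrete functions supported on $\Fim$ and vanishing at all nodes off $S_m$. Since $S_m$ is an interior trace (possibly touching $\bOD$ at its endpoints), this is a discrete $H^{1/2}\to H^1$ stable extension statement; one cannot simply use the continuous extension theorem because $\wh$ is constrained to be the particular nodal finite element function. I would handle this by a standard scaling argument on the shape-regular mesh $\Taux^{i,m}$: since the auxiliary mesh has meshsize comparable to $\h$ and a uniformly bounded number of layers of elements around $S_m$, an inverse inequality plus the local quasi-uniformity gives $\| \wh \|_{1,\F_i} \simeq \| \wh \|_{1,\Fim}$ controlled by the discrete $H^{1/2}$-type norm of its trace, up to the usual $\log$ or order-one shape-regularity constants. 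The delicate point is ensuring the constants are uniform across fractures and independent of $\h$, which is precisely where the quasi-uniformity and shape regularity of both $\TSq^m$ and $\Taux^{i,m}$, together with $\h_i \simeq \h_m \simeq \h$, are used. Once the local bound holds uniformly, I would assemble the global test function $\wh = \sum_{m\in\Ti} \wh^m$ (a legitimate element of $\Wauxi$ by the direct sum structure) and use the disjointness of supports together with the product norm on $\L^i$ to recover the global inf-sup constant.
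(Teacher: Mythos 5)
Your overall strategy---decoupling trace by trace and then realizing the supremum using only elements of $\Wauxim$---has a genuine gap at precisely the step you flag as the ``main obstacle'': the discrete extension estimate $\| \wh \|_{1,\Fi} \lesssim \| \wh|_{S_m} \|_{\Humi(S_m)}$ is \emph{false} for functions of $\Wauxim$. Every $\wh \in \Wauxim$ vanishes at all nodes of the auxiliary mesh that are not on $S_m$, hence is supported in a strip of width $\simeq \h$ around $S_m$ and decays from its trace values to zero within one mesh layer. For such functions the correct scaling (which the paper itself records and uses, see \eqref{equiv1}) is
\[
\| \wh \|_{1,\Fi}^2 \simeq \h^{-1} \| \wh \|_{0,S_m}^2 ,
\]
and the factor $\h^{-1/2}$ cannot be removed by shape regularity or quasi-uniformity: inverse inequalities on the thin strip are what produce it, they do not absorb it. Concretely, take $\lh \equiv 1$ on $S_m$ (with $S_m$ away from the Dirichlet boundary), so that $\| \lh \|_{\L^i} \simeq 1$. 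Then for every $\wh \in \Wauxim$,
\[
\frac{\int_{S_m} \lh \wh}{\| \wh \|_{1,\Fi}} \lesssim \h^{1/2}\,\frac{\int_{S_m} \wh}{\| \wh \|_{0,S_m}} \leq \h^{1/2}\, | S_m |^{1/2},
\]
so the supremum over $\Wauxim$ alone degenerates like $\h^{1/2}$, and no $\h$-independent inf-sup constant can be obtained this way. An $H^{1/2}$-stable extension of a low-frequency trace datum must decay over a distance comparable to its wavelength, which is $O(1)$ for constants, not $O(\h)$; no element of $\Wauxim$ can do that.

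This is exactly why the condition is stated with the supremum over $V_\h^i + \Wauxi$ rather than over $\Wauxi$ alone, and why the paper's proof is two-scale. There, $\lh$ is split as a coarse part, living on an embedded multiplier mesh of size $\h_0 \sim K\h$, plus an $L^2$-orthogonal high-frequency remainder. The coarse part is handled by a quasi-interpolant in $V_\h^i$ of a continuous dual-norm representative (this is where genuinely $H^1$-bounded test functions come from, and where $K$ must be chosen large enough to beat the quasi-interpolation error); only the remainder is handled by an element of $\Wauxi$, in the spirit you propose, and there the direct inequality $\| \lh^{\perp} \|_{\L^i} \lesssim \h_0^{1/2} \| \lh^{\perp} \|_{0,\Sqi}$, valid for the high-frequency component, exactly compensates the $\h^{-1/2}$ loss of the strip extension. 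The two test functions are then combined with a tuning parameter, using the biorthogonality of the splitting. Your first ingredient (the discrete Riesz representative on the matching pair $\Lh^{i,m} = \Phh^m$, i.e.\ $H^{1/2}$-stability of the $L^2$ projection on the quasi-uniform trace mesh) is sound as far as it goes, but without the coarse component carried by $V_\h^i$ the argument cannot close.
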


\begin{proof}
	By standard arguments, we can prove that there exists a subspace $\Lho  \subset \Lh$ of mesh size $\simeq \h$ such that
	\begin{equation}\label{infsupcoarse}
		\inf_{\lh \in \Lho} \sup_{\vh \in V_\h^i} \frac { \int_{\Sqi} \lh \vh}{\| \lh \|_{\L^i} \| w_h \|_{1,F_i}} \gtrsim 1.
	\end{equation}
	
	Indeed, let $\Lho $ denote the subspace corresponding to a coarser embedded  mesh of meshsize $\ho$, and let $\lh \in \Lho $. For a suitable $w^{\lh }\in V^i$ and for $w_\delta \in V_\h^i$ a suitable quasi interpolant of $w^\lh$  we have
	\begin{multline*}
		\| \lh \|_{\L^i} = \sup_{w \in V^i} \frac {\langle \lh, w \rangle} {\| w \|_{1,\Fi}} \lesssim \frac {\langle \lh, w^\lh \rangle} {\| w^\lh \|_{1,\Fi}} =
		\frac {\langle \lh, w^\lh - \vh \rangle} {\| w^\lh \|_{1,\Fi}} + \frac {\langle \lh, \vh \rangle} {\| w^\lh \|_{1,\Fi}}\\
		\lesssim 
		\frac {\| \lh \|_{0,\Sqi} \|  w^\lh- \vh \|_{0,\Sqi}} {\| w^\lh \|_{1,\Fi}} + \frac {\langle \lh, \vh \rangle} {\| w^\lh \|_{1,\Fi}} \\
		\lesssim \frac{\ho^{-1/2} \| \lh \|_{\L^i} \h^{1/2} \|  w^\lh  \|_{1,\Fi}} {\| w \|_{1,\Fi}} + \frac {\langle \lh, \vh \rangle} {\| w^\lh \|_{1,\Fi}}.
	\end{multline*}
	This implies that for some constant $C$ we have
	\[
	\| \lh \|_{\L^i} (1 - C \frac{\h^{1/2}}{\ho^{1/2}}) \lesssim \frac {\langle \lh, \vh \rangle} {\| w^\lh \|_{1,\Fi}}.
	\]
	If we choose $\ho \sim K \h$ with $K$ sufficiently large we then obtain \eqref{infsupcoarse}.
	
	\
	
	\newcommand{\wo}{w^0}		
	
	Let now $\lh \in \Lh^i$. We can split $\lh$ as $\lh = \lho  + \lhperp $ with $\lho  \in \Lho $ and $\lhperp $ satisfying
	\[
	\int_{\Sqi} \lhperp  \mu_h^0 = 0, \qquad \text{for all }\mu_h^0 \in \Lho .
	\]
	We let $\wo \in V_\h^i$ with $\| \wo \|_{1,\Fi} \simeq \| \lho \|_{\L^i}$ be such that $\langle \lho , \wo \rangle \geq \| \lho \|_{\L^i}^2$.
	The high frequency component $\lhperp $ satisfies a direct inequality of the form
	\[
	\| \lhperp  \|_{\L^i} \lesssim \ho^{1/2} \| \lhperp  \|_{0,\Sqi} \lesssim \ho^{1/2} \frac{ \langle \lhperp , \wh \rangle}{\| \wh \|_{0,\Sqi}},
	\]
	where $\wh \in \Wauxi$ is the element coinciding with $\lhperp $ on $\Sqi$, and vanishing on all other nodes of the auxiliary mesh $\Taux$. Letting $\Nodesaux^{m,i}$ denote the set of nodes of the local  auxiliary mesh $\Taux^{m,i}$, and using the equivalence of the $L^2$ norms of finite element functions with the weighted euclidean norm of their nodal values, I have 
	\begin{equation}\label{equiv1}
		\| \wh \|_{1,\Fi}^2 = \sum_{T \in \cup_m \Taux^{m,i}} \| \wh \|_{1,T}^2 \simeq \sum_{x\in\cup_{m}\Nodesaux^{m,i}} | \wh(x) |^2 \simeq \h^{-1} \| \wh \|_{0,\Sqi}^2,
	\end{equation}
	so that I can write
	\[
	\| \lhperp  \|_{\L^i} \lesssim \frac{\h_0^{1/2}}{\h^{1/2}} \frac{ \langle \lhperp , \wh \rangle}{\| \wh \|_{1,F_i}} \lesssim 
	\frac{ \langle \lhperp , \wh \rangle}{\| \wh \|_{1,F_i}}.
	\]

	As, by construction, $\wh|_{\Sqi} \perp \lho $, we have
	\begin{multline*}
		\langle \lho  + \lhperp , \vh + \alpha \wh \rangle 
		= \langle \lho , \wo \rangle +  \langle \lhperp , \wo \rangle  + \alpha \langle \lhperp ,\wh \rangle 
		\\	\geq \| \lho  \|^2_{\L^i} + \alpha \| \lhperp  \|_{\L^i}^2 - C \| \lho \|_{\L^i}  \| \lhperp \|_{\L^i}.
	\end{multline*}
	Provided $\alpha$ is large enough, we then obtain 
	\[
	\langle \lho  + \lhperp , \vh + \alpha \wh \rangle \gtrsim  \| \lho  \|_{\L^i}^2 +  \| \lhperp \|^2_{\L^i}.
	\]
	We conclude by observing that, by triangle inequality, $\| \lh \|_{\L^i} \leq \| \lho \|_{\L^i} + \| \lhperp \|_{\L^i}$, and that we have
	\[
	\| \wo + \alpha \wh \|_{1,\Fi} \leq \| w_\delta \|_{1,F_i} + \alpha \| \wh \|_{1,F_1}  \lesssim \| \lho  \|_{\L^i} +  \| \lhperp \|_{\L^i},
	\]
	which concludes the proof.
\end{proof}

In order to implement the proposed stabilization, we next need to evaluate the stabilization terms. 
Looking at the definition \eqref{abstractdualnorm}, we see that this reduces to evaluating the action on  elements $\wh \in \Wauxi$, of the $H^{-1}_*(\Fi)$ functional $A_i \uh -\sum_{S_m\in \Ti} B^T_{i.m} \lhim  $, where $\uh \in V_\h^i$ and $\lhim  \in \Lh^{i,m}$ are piecewise linear finite element functions, as defined in  \eqref{defVh},\eqref{defPh},\eqref{eq:defLh}. 
 We are able to evaluate such an action of as follows:
\begin{multline}
\langle A_i \uh  , \wh \rangle  =
\sum_{T \in \Thi} \int_T K_i \nabla \uh^i\, \cdot\, \nabla \wh  
=
\sum_{T \in \Thi} \left(\int_{\bK} K_i \nabla \uh^i \cdot \nK \wh 
\right)
 \\ = \sum_{e \in \Edges} \int_{e} \jump{ K_i \nabla \uh^i \cdot \norm}  \wh = \sum_{e \in \Edges} \jump{ K_i \nabla \uh^i \cdot \norm} |_e \int_{e} \wh.
\end{multline}
In particular, for $\wh = \auxb_\ell^m$, $\auxb_\ell^m$ being one of the nodal basis functions for $\Wauxim$, we get
\begin{equation}\label{coefsevalAu}
	\langle A_i \uh  , \auxb^m_\ell \rangle
	= \sum_{e \in \Edges} \weight_e^\ell \jump{ K_i \nabla \uh^i \cdot \norm} |_e .
\end{equation}
where $\weight_e^\ell = \int_e \auxb^m_\ell$, and where we exploited the fact that, on each edge $e$ of the triangulation $\Thi$ the flux $\jump{K_i \nabla \u_\h^i \cdot \nu}$ is a constant.

 As far as $B_{i}$ is concerned, we have
\begin{equation}\label{coefsevalBTl}
	\langle B_i \lh  , \wh \rangle = 	\langle  \sum_{S_m\in \Ti} B_{i,m}^T \lhim  , \wh \rangle  =
\sum_{S_m\in \Ti}\int_{S_m} \lhim  \wh.
\end{equation}
In the above expression, for which, until now, we did not exploit any information on the function $\wh$, the sum spans over the triangles and edges of the mesh $\Thi$, where the functions $h^i_\h$ and $w^i_\h$ ``live''. The stabilization term is obtained by using expressions \eqref{coefsevalAu} and \eqref{coefsevalBTl} in \eqref{defstabilizingterm}.

\section{A new mesh dependent stabilization} \label{sec:stab_meshdependent}
We now want to define, starting from  the definition \eqref{defstabilizingterm} of the bilinear form $\calS_i$,
 a simpler stabilization term.
 Assuming that  $\Wauxim$ is defined as in Section \ref{sec:StabTerm}, so that $\Wauxim|_{S_m} = \L_\h^{i,m}$ and the degrees of freedom for $\Wauxim$ and for $\L_\h^{i,m}$  coincide, 
we propose to replace, in the definition \eqref{abstractdualnorm}, the matrix $\matS$, which is the inverse of a stiffness matrix, with a suitably scaled inverse of an $L^2(S_m)$ mass matrix.

\

To this aim we choose $\hBasis$ to be the nodal basis for $\Wauxim$, and we introduce a second basis $\tBasis$ for the space $ \Lh^{i,m} = \Wauxim|_{S_m}$. This second basis will only be needed ``on paper'',  for deriving the new stabilization term, and will not actually need to  be implemented. More precisely, we let  \[\tBasis = \{\tw^m_k, \cdots k = 1, \cdots, N_m  \}\] be the basis for $\Lh^{i,m}$ which is bi-orthogonal to the basis $\hBasis = \{\hw_k|_{S_m}, \ k = 1, \cdots, N_m  \}$: the elements of $\tBasis$ are defined by the biorthogonality condition
\begin{equation}
\int_{S_m} \tw^m_\ell \hw^m_n = \delta_{n\ell}, \qquad n,\ell =1,\cdots,N_m.
\end{equation}
The restrictions to $\T_m$ of the functions $\hw_\ell$  coincide with the nodal basis for the space $\Lh^{i,m}$.  The 
$\tw_k$ are instead non local piecewise linear functions on $\T_m$ which, as already pointed out, are only used for the analysis and whose actual construction is never needed in practice.
We observe that we have
\[
\| \hw_\ell \|_{0,S_m} \simeq \sqrt{\h}, \qquad \| \hw_\ell \|_{0,\infty S_m} \simeq 1, \qquad \| \tw_\ell \|_{0,S_m} \simeq \frac1{\sqrt{\h}}, \qquad \| \tw_\ell \|_{0,\infty,S_m} \simeq \frac 1 \h.
\]

 We have the following expansion for the elements  $\lh \in \Lh^{i,m}$:
\begin{equation}\label{expansiontilde}
 \lh =   \sum_{\ell = 1}^{N_m} \left(\int_{S_m} \lh \hw^m_\ell \right) \tw^m_\ell.
\end{equation}
As $\tBasis$ is biorthogonal to  the nodal basis, it is not difficult to check that we then have the identity
\[
\int_{S_m} \lh \tw^m_\ell = \lh(x_\ell), \qquad \forall \lh \in \Lh^{i,m}.
\]
Moreover, as by construction  the elements of $\Wauxim$ are uniquely identified by their trace on $S_m$, we also have the identity
\begin{equation}
	\hw = \sum_{\ell = 1}^{N_m} \left(\int_{S_m} \hw \tw^m_\ell \right) \hw^m_\ell.
\end{equation}
Given $\lh \in \Lh^{i,m}$, we let $\hw(\lh) \in \Wauxim$ be defined as
\[
\hw(\lh) = \sum_{\ell = 1}^{N_m} \left(\int_{S_m} \lh \tw^m_\ell \right)\hw^m_\ell.
\]
It is not difficult to check that
\[ \hw(\lh)|_{S_m} = \lh.\]

%


To build the new stabilization term, we replace the matrix $\matS^m$ in the definition of the bilinear form $\stab\cdot\cdot$ with  the scaled mass matrix $\h \, \widetilde \matM^m$  on $\Lh^{i,m}$, computed w.r. to the $\tw_k$ basis:
\[
\widetilde  {\mathsf{m}}_{k\ell}  = \int_{S_m} \tw^m_k \tw^m_\ell.
\]
Observe that  $\tM = (\widetilde{\mathsf{m}}_{k\ell}) = (\matM^m)^{-1}$ coincides with the mass matrix for  $\Lh^{i,m}$, computed w.r. to the $\tw_k$ basis. 
We then set
\begin{multline*}
\stab{f}{g} = \sum_{\ell,k} \h\, \widetilde {\mathsf{m}}_{k\ell} \langle f, \hw^m_\ell \rangle \langle g,\hw^m_k \rangle  = \sum_{\ell,k} \h\, \int_{S_m} \tw^m_k \tw^m_\ell \langle f, \hw^m_\ell \rangle \langle g,\hw^m_k \rangle \\=
\h \int_{S_m} \Big( \sum_k \langle g,\hw^m_k \rangle \tw^m_k \Big)
\Big( \sum_\ell \langle f,\hw^m_\ell \rangle \tw^m_\ell \Big).
\end{multline*}

Letting $f$ and $g$ be replaced by the action of the operators $A$ and $B$, we obtain the building blocks for the new stabilization.

More in detail, we have 
\begin{multline*}
	\stab{A\uh}{A\vh} \\=
	\h  \sum_k \sum_\ell  
	\Big(\sum_{e \in \Edges} \weight_e^k \jump{ K_i \nabla \uh^i \cdot \norm} |_e\Big)
 \Big(
	\sum_{e \in \Edges} \weight_e^\ell \jump{ K_i \nabla \vh^i \cdot \norm} |_e
\Big) 	
	 \int_{S_m}  \tw^m_k(x) 
	 \tw^m_\ell(x)  \,dx \\
=	 	\h  
	 	\sum_{e \in \Edges}
	 		\sum_{e' \in \Edges}
	 	 \jump{ K_i \nabla \vh^i \cdot \norm} |_{e'}
	  \jump{ K_i \nabla \uh^i \cdot \norm} |_e 
	 	\sum_k \sum_\ell  
	  \weight_e^k 
\weight_{e'}^\ell 
	 \widetilde{ \mathsf{m}}_{\ell k}.
\end{multline*}
and 
\begin{multline*}
	\stab{B^T \mh}   {B^T \lh} \\ =  \h
	\int_{S_m}  \, dx   
\Big(\sum_\ell \tw^m_\ell(x)\int_{S_m}\,dy \mh^i(y) \hw_\ell(y) \Big)
	\Big(\sum_k \tw^m_k(x)\int_{S_m}\,dy \lhi (y) \hw_k(y) \Big)\\
 = \h \int_{S_m} \lh \mh,
\end{multline*}
as well as 
\begin{multline*}
\stab{A \uh } {B^T \mh} \\= \h
\int_{S_m}  \, dx \Big(\sum_{\ell} \ \tw^m_\ell(x) 
\sum_{e \in \Edges} \weight_e^\ell \jump{ K_i\nabla \uh^i \cdot \norm}  
\Big) \Big(\sum_k \tw^m_k(x)\int_{S_m}\,dy \mh^i (y) \hw_k(y)\Big)\\
=  \h\sum_{\ell} \Big( \sum_{e \in \Edges} \weight_e^\ell \jump{ K_i \nabla \uh^i \cdot \norm}  \Big)
\int_{S_m}  \,dx   \tw_\ell(x) 
 \mh^i(x) \\ =  \h\sum_{\ell}  \Big( \sum_{e \in \Edges} \weight_e^\ell \jump{K_i \nabla \uh^i \cdot \norm}  \Big)
\mh^i(x_\ell).
\end{multline*}
Finally, we have
\begin{multline*}
	\stab{g}{B^T \mh} = \h	\int_{S_m}\, dx \mh^i(x) \Big( \sum_{\ell} 
	\tw^m_\ell(x) \Big(\int_{\Fim} g(y)  \hw^m_\ell(y) \,dy\Big)
\Big)\\
	= \h \int_{\Fim}   g(y)\Big( \sum_{\ell} \Big(\int_{S_m}\, dx \mh^i(x) 
	\tw^m_\ell(x) \Big) \hw^m_\ell(y)
\Big)  \,dy = \h \int_{\Fim}   g(y) \hExt{\mh^i}  \,dy ,
\end{multline*}
where $\hExt{\lh} \in \Wauxim$ is the function in $\Wauxim$ that coincides with $\lh$ on $S_m$. 


\NOTE{We have $\hw_\ell$ basis for $\Lh$, $\tw_\ell = \sum_k \mathsf{a}_{\ell k} \hw_k$ with
	\[
	\int_{S_m} \tw_\ell \hw_{\ell'} = \delta_{\ell,\ell'} = 	\int_{S_m} \sum_k \mathsf{a}_{\ell k} \hw_k \hw_{\ell'} = 	 \sum_k \mathsf{a}_{\ell k} \int_{S_m} \hw_k \hw_{\ell'}  = \sum_k \mathsf{a}_{\ell,k} \mathsf{m}_{k,\ell'}
	\] 
	
	That is $A M = I$. Now we can write
	\[
	\int_{S_m} \tw_\ell \tw_{\ell'} = \int_{S_m}  \sum_k \mathsf{a}_{\ell k} \hw_k  \sum_{k'} \mathsf{a}_{\ell k'} \hw_{k'} = \sum_k \mathsf{a}_{\ell k} \sum_{k'} \mathsf{a}_{\ell' k'} \mathsf{m}_{k,k'} = \sum_k \mathsf{a}_{\ell k} (AM)_{\ell' k} = \mathsf{a}_{\ell,\ell'}.
	\]
	
}
%
%
%

Letting 
\[
[\cdot,\cdot]_{-1,i,\h} =  \sum_{m\in \Ti} \stab\cdot\cdot,
\]
we can consider the following  stabilized discrete problem 
\begin{pb}\label{pb:3bBHstab}
	Find $\uh = (\uh^i)_i \in V_h$, $\lh = (\lh^i)_i \in \Lh$, $\phh \in \Phh$ such that
\begin{gather*}
\hskip-4cm	\int_{\Fi} K_i \nabla \uh^i \cdot \nabla \vh^i - \int_\Sqi \lhi  \vh^i + \alpha t 
\stabi{A \uh^i - B^T \lhi }{A \vh^i}
\\ \hskip6cm= \int_{\Fi} g^i \vh^i + \alpha t \stabi {g^i} {A \vh^i} \qquad \forall \vh^i \in V_\h^i, \ \forall i\\
	\int_\Sqi \uh^i \mh^i - \int_\Sqi \phh \mh^i  - \alpha \stabi{A\uh - B^T \lh}{B^T \mh} = \alpha \stabi{g^i}{B^T \lh g^i}, \qquad \forall \mh^i \in \Lh^i, \ \forall i \\
	\sumi  \int_{\Sqi} \lhi  \psh = 0, \qquad \forall \psh \in \Phh.
\end{gather*}
\end{pb}

\newcommand{\uI}{\u_\pi}
\newcommand{\lI}{\l_\pi}
\newcommand{\phI}{\phh^I}

We have the following theorem.
\begin{thm}\label{thm:5.2} There exists $\alpha_0$ such that for all $\alpha < \alpha_0$ Problem \ref{pb:3bBHstab} is well posed, and the following estimate holds
	\begin{multline}
	 \normoneh{\u - \uh}
	 \lesssim \inf_{\uI \in V_\h} \normoneh{\u - \uI}+ \inf_{\mh \in \Lh} \| \l - \mh  \|_{\L} + \inf_{\psh \in \Phh} \| \ph - \psh \|^2_{\Ph}.
	\end{multline}
with
\[
\normoneh{\u} = 	\sum_{i=1}^L | \uh - \u |^2_{1,\Fi} 
+  \summ | S_m | \left| 
\fint_{S_m} \scjump{\uh} \right|^2,
\]
 where for $S_m = \Fi \cap F_j$,  $i < j$, we set $\scjump{\uh} = h_i - h_j$.  
\end{thm}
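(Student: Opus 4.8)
The plan is to read Problem~\ref{pb:3bBHstab} as a strongly consistent stabilized saddle point problem and to run the usual program: consistency, a discrete inf--sup estimate in a suitable mesh dependent triple norm, and quasi optimality. \emph{Consistency} is immediate and is what makes the scheme well behaved: the exact solution satisfies $A_i\u^i - \summi B_{i,m}^T\l^{i,m} = g^i$ in $H^{-1}_*(\Fi)$, so the fracture--local residual entering every stabilizing bracket vanishes identically, and $(\u,\l,\ph)$ therefore solves the discrete equations when tested against discrete functions. Galerkin orthogonality follows.

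For \emph{stability} I would fix $t=1$, the value making the stabilization positive, and first test the three equations diagonally, with $(\vh,\mh,\psh)=(\uh,\lh,\phh)$. The coupling terms and the $\ph$--$\l$ pairings cancel (the latter using the third equation), leaving
\[
\sum_i \int_{\Fi} K_i |\nabla \uh^i|^2 \;+\; \alpha \sum_i \stabi{A\uh^i - B^T\lhi}{A\uh^i - B^T\lhi}.
\]
The first sum controls $\sum_i|\uh^i|_{1,\Fi}^2$. For the second, the key identity $\stab{B^T\mh}{B^T\lh}=\h\int_{S_m}\lh\mh$, together with the inverse/trace estimate $\stabi{A\uh^i}{A\uh^i}\lesssim|\uh^i|_{1,\Fi}^2$, gives by the triangle inequality $\h\|\lh\|_{0,\Sqi}^2\lesssim \stabi{A\uh^i-B^T\lhi}{A\uh^i-B^T\lhi}+|\uh^i|_{1,\Fi}^2$, i.e. control of the weak multiplier norm. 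Writing $r=A\uh^i-B_{i,m}^T\lhim$ for the local residual on $S_m$ and testing the second equation against the multiplier equal to the constant $\pm1$ on the two fractures meeting at $S_m$ (so that the $\phh$ contribution cancels), Cauchy--Schwarz in the stabilizing bracket, $|\stab{r}{B^T\mathbf 1}|\le \stab{r}{r}^{1/2}(\h|S_m|)^{1/2}$, bounds $|S_m|^{1/2}\,|\fint_{S_m}\scjump{\uh}|$ by $\alpha\,\h^{1/2}\stab{r}{r}^{1/2}$; hence the mean--jump term of $\normoneh\cdot$ is also dominated by the diagonal quantity. I would then augment the diagonal test: $\|\phh\|_\Ph$ is recovered from the base inf--sup \eqref{infsupbase}, and $\|\lh\|_\L$ from the coarse/fine splitting $\lh=\lho+\lhperp$ of the Lemma of Section~\ref{sec:StabTerm} --- the coarse part $\lho$ through \eqref{infsupcoarse} (paired with a primal test function) and the fine part through the direct inequality $\|\lhperp\|_\L\lesssim\h^{1/2}\|\lhperp\|_0$, already controlled above. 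Choosing $\alpha\le\alpha_0$ small enough to absorb the cross terms produced by the augmenting test functions yields a full discrete inf--sup in $\normoneh\cdot^2+\|\cdot\|_\L^2+\|\cdot\|_\Ph^2$, hence well posedness.

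The \emph{error estimate} then follows by a Strang argument: bound $\normoneh{\u-\uh}$ by $\normoneh{\u-\uI}$ plus the stability lower bound applied to $\uh-\uI$, and use Galerkin orthogonality to turn the discrete residual into pairings against the best approximation errors $\u-\uI$, $\l-\mh$, $\ph-\psh$, which are estimated by continuity of the forms. The only non routine point in the continuity step is that pairings such as $\int_{S_m}(\l-\mh)(\uh-\uI)$ require the full trace norm $\|\uh-\uI\|_{H^{1/2}(S_m)}\lesssim\|\uh-\uI\|_{1,\Fi}$, whereas stability controls only $\normoneh{\cdot}$. This gap is closed by a Poincar\'e--Friedrichs inequality $\sum_i\|w^i\|_{1,\Fi}^2\lesssim\normoneh w^2$, the mean--jump analogue of \eqref{poincare}: on the finite dimensional kernel of the broken seminorm (functions constant on each fracture) the mean jumps, together with the Dirichlet condition, are injective, so by a Peetre--Tartar argument the broken seminorm plus mean jumps is equivalent to the broken $H^1$ norm.

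The \textbf{main obstacle} is the multiplier bookkeeping. The stabilization delivers only the weak control $\h^{1/2}\|\lh\|_{0,\Sqi}$, which is equivalent to $\|\cdot\|_\L$ \emph{only on the high frequency component} of $\lh$; the low frequency component is invisible to the stabilization and must be recovered, as in the Lemma of Section~\ref{sec:StabTerm}, from the coarse inf--sup, while on the right hand side the multiplier best approximation enters in the genuine norm $\|\l-\mh\|_\L$. Making the coarse/fine splitting interact correctly with the diagonal coercivity, and quantifying the smallness of $\alpha_0$ needed to absorb all the resulting cross terms, is the delicate heart of the proof; the Poincar\'e bridge above is the second, more routine, ingredient.
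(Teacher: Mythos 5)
Your proposal's core coincides with the paper's own proof: the paper also establishes well-posedness through (i) a diagonal test, in which the coupling terms cancel and the identity $\stab{B^T\mh}{B^T\lh}=\h\int_{S_m}\lh\mh$ together with the bound $\stabi{A\uh}{A\uh}\lesssim\sum_i|\uh^i|^2_{1,\Fi}$ yields, for $\alpha$ small, control of $\sum_i K_i|\uh^i|^2_{1,\Fi}+\alpha\|\lh\|^2_{-1/2,\h}$, and (ii) a test with the piecewise constant multiplier pair $\mh^{\pm}=\pm\fint_{S_m}\scjump{\uh}$, which lies in $\ker\B$, to capture the mean-jump part of $\normoneh{\cdot}$ (Lemma~\ref{infsupkerB}); the error bound is then the standard saddle-point quasi-optimality argument with projections $(\uI,\lI,\phI)$ chosen so that $(\uI,\lI)\in\ker\B$. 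Where you genuinely differ is the multiplier bookkeeping, and there the comparison favors the paper: it never controls the discrete multiplier in the $\L$ norm, working instead throughout with the mesh-dependent norms $\|\lh\|^2_{-1/2,\h}=\sum_i\h_i\|\lhi\|^2_{0,\Sqi}$ and $\|\phh\|_{1/2,\h}$, for which the inf-sup for $\B$ is elementary; consequently the coarse/fine splitting and conditions \eqref{infsupcoarse}, \eqref{infsupaux}, which you call the delicate heart of the proof, are never needed --- they belong to the natural-norm stabilization of Sections 3 and 4, and bypassing them is precisely the simplification that the mesh-dependent stabilization buys. On the other hand, your Poincar\'e--Friedrichs bridge $\sum_i\|w^i\|^2_{1,\Fi}\lesssim\normoneh{w}^2$ (Peetre--Tartar, using connectivity of the network and the Dirichlet condition) is a sound extra ingredient that the paper never states, although its continuity claims for $\A$ and the conversion of the multiplier approximation error into the $\|\l-\mh\|_{\L}$ norm appearing in the statement arguably require it (or a discrete trace/inverse argument); on this point your write-up is more careful than the paper's.

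Two repairs are needed. First, fixing $t=1$ proves less than the theorem: Problem~\ref{pb:3bBHstab} carries the parameter $t$ (the numerical tests use $t=0$), so the result must hold for any fixed $t$, with $\alpha_0$ depending on $t$. The fix uses only tools you already invoke: the diagonal stabilization term equals $t\stabi{A\uh}{A\uh}-(1+t)\stabi{A\uh}{B^T\lh}+\stabi{B^T\lh}{B^T\lh}$, and since $\stabi{A\uh}{A\uh}\lesssim\sum_i|\uh^i|^2_{1,\Fi}$, every $t$-dependent contribution is absorbed into $\sum_i K_i|\uh^i|^2_{1,\Fi}$ once $\alpha$ is small enough; positivity of the bracket at $t=1$ is not what drives the argument. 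Second, you derive the mean-jump control by testing the second equation of the scheme, i.e., as an a priori estimate on the discrete solution (incidentally dropping the data term $\stabi{g}{B^T\mh}$ that this produces); but in the error analysis you then apply the stability bound to $\uh-\uI$, which does not solve the scheme. What is required is the inf-sup statement for the bilinear form, namely a lower bound for $\A(\uh,\lh;0,\mh)$ with the constant multiplier $\mh$, valid for arbitrary discrete pairs and with the $\alpha$-perturbations absorbed, which is exactly how the paper phrases it in Lemma~\ref{infsupkerB}. The computation is identical, so this is a framing defect rather than a missing idea, but as written the logic does not close.
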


Once again, the $H^{3/2-\varepsilon}(F_i)$ smoothness of $\u$ yields the following corollary.
\begin{cor}
	For al $\varepsilon > 0$ it holds that
	\[
\normoneh{u - \uh}	  \lesssim C_\varepsilon
	\h^{1/2-\varepsilon}.
	\]
\end{cor}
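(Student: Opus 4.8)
The plan is to follow the abstract strategy of the proof of Theorem~\ref{thm:main}, now with the mesh dependent stabilization $\stabi\cdot\cdot$ and the mesh dependent norm $\normoneh\cdot$. The cornerstone is \emph{consistency}. Testing the first equation of Problem~\ref{pb:3fcont} against an auxiliary basis function $\hw^m_\ell\in\Wauxim\subset H^1_*(\Fi)$ gives $\langle A_i\u^i - B^T\l^i,\hw^m_\ell\rangle = \langle g^i,\hw^m_\ell\rangle$, whence $\stabi{A_i\u^i - B^T\l^i}{\,\cdot\,} = \stabi{g^i}{\,\cdot\,}$; therefore every stabilizing contribution on the left of Problem~\ref{pb:3bBHstab} is matched by the corresponding term on the right when the exact solution is inserted. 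Hence $(\u,\l,\ph)$ solves the discrete equations as well, the scheme is fully consistent, and subtracting yields Galerkin orthogonality for the stabilized form. I would also record at the outset the identities established in Section~\ref{sec:stab_meshdependent}, in particular $\stab{B^T\mh}{B^T\lh} = \h\int_{S_m}\lh\mh$, so that $\stabi{B^T\lh}{B^T\lh} = \h\summi\|\lh\|_{0,S_m}^2$, and the flux representation of $\stabi{A\uh}{A\vh}$ in terms of the edge jumps $\jump{K_i\nabla\uh^i\cdot\norm}$.

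The next ingredient is the frequency splitting already implicit in the inf-sup Lemma. Writing $\lh = \lho + \lhperp$ with $\lho\in\Lho$ the coarse (low frequency) part, the direct inequality $\|\lhperp\|_{\L^i}\lesssim\ho^{1/2}\|\lhperp\|_{0,\Sqi}$ shows $\|\lhperp\|_{\L^i}^2\lesssim\stabi{B^T\lhperp}{B^T\lhperp}$, i.e. the scaled $L^2$ stabilization is equivalent to the dual norm \emph{on the high frequencies}, while on the coarse part it is a factor $O(\h)$ too small. This dictates the division of labour: the stabilization controls $\lhperp$, whereas the coarse inf-sup \eqref{infsupcoarse} against $V_\h$ controls $\lho$.

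For well-posedness I would first use the base inf-sup \eqref{infsupbase} to eliminate $\phh$ and reduce to the pair $(\uh,\lh)\in V_\h\times\KerB$, exactly as in the proof of Theorem~\ref{thm:main}. On this reduced problem a global inf-sup in $\normoneh\cdot\times\|\cdot\|_{\L}$ follows by combining: (i) coercivity of $\sumi\int_{\Fi}K_i|\nabla\uh^i|^2$ on the broken $H^1$ seminorm; (ii) control of $\lhperp$ by $\stabi{B^T\lh}{B^T\lh}$ via the equivalence above; (iii) control of $\lho$ by the coarse inf-sup; and (iv) control of the mean jump by testing the perturbed second equation against fracture-wise constant multipliers, which yields, on each trace, $\int_{S_m}(\uh^i-\phh) = \alpha\,\stabi{A\uh-B^T\lh-g^i}{B^T 1}$, so that the mean jump $\fint_{S_m}\scjump\uh$ is expressed through stabilization residuals and the term $|S_m|\,|\fint_{S_m}\scjump\uh|^2$ is controlled by the remaining quantities. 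The flux based terms $\stabi{A\vh}{A\vh}$ and the mixed terms $\stabi{A\uh}{B^T\mh}$ are bounded by the broken $H^1$ seminorm through standard trace and inverse inequalities, and choosing $\alpha\le\alpha_0$ small keeps these perturbations from spoiling the coercivity.

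Finally, stability together with Galerkin orthogonality gives the error estimate by a standard C\'ea/Strang argument: for arbitrary $\uI\in V_\h$, $\mh\in\Lh$, $\psh\in\Phh$ one bounds $\normoneh{\uh-\uI}$ (plus the multiplier and trace discrepancies) by the inf-sup, replaces the discrete solution with the exact one through orthogonality, estimates the resulting form applied to $(\u-\uI,\l-\mh,\ph-\psh)$ by boundedness, and concludes with the triangle inequality. I expect the main obstacle to be the boundedness step for the mesh dependent part, namely $\stabi{B^T(\l-\mh)}{B^T(\l-\mh)} = \h\summi\|\l-\mh\|_{0,S_m}^2$: since the inverse inequality $\h^{1/2}\|\cdot\|_{0,S_m}\lesssim\|\cdot\|_{\L}$ is available only for discrete functions, one must take $\mh$ to be an $L^2$-stable quasi-interpolant of $\l$, for which $\h^{1/2}\|\l-\mh\|_{0,S_m}\lesssim\|\l-\mh\|_{\L}$ holds to the same (optimal) order, so that this contribution is genuinely absorbed into $\inf_{\mh}\|\l-\mh\|_{\L}$. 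The second delicate point is matching the mean-jump component of $\normoneh\cdot$ on both sides: the same coarse-versus-fine splitting that drives stability must capture the low-frequency defect of the weakly enforced continuity through the scaling $|S_m|\,|\fint_{S_m}\scjump\cdot|^2$ — and no stronger fractional jump norm — which is precisely why $\normoneh\cdot$, rather than a full interface norm, is the natural norm here. Inserting the $H^{3/2-\varepsilon}$ approximation rates for $\u,\l,\ph$ then yields the Corollary.
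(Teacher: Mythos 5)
Your argument is correct in substance, but it proves far more than the paper does at this point: the paper's proof of this corollary is a one-liner, since Theorem \ref{thm:5.2}, proved in the preceding section, already supplies the quasi-optimal bound on $\normoneh{\u-\uh}$ in terms of the three best-approximation errors, and the corollary follows by inserting the $H^{3/2-\varepsilon}(F_i)$ regularity of $\u$ (and the induced regularity of $\l$ and $\ph$) together with standard approximation estimates for piecewise linear elements --- which is precisely the single closing sentence of your proposal. Everything before that sentence is a re-derivation of Theorem \ref{thm:5.2} itself, and there your route genuinely differs from the paper's: you aim for stability with the multiplier measured in the true dual norm $\|\cdot\|_{\L}$, splitting $\lh=\lho+\lhperp$, controlling $\lhperp$ by the stabilization and $\lho$ by the coarse inf-sup \eqref{infsupcoarse}, a device the paper uses only in Section \ref{sec:StabTerm} for the natural-norm stabilization; the paper's proof of Theorem \ref{thm:5.2} avoids any frequency splitting by working throughout in the mesh-dependent norms $\|\cdot\|_{-1/2,\h}$ and $\|\cdot\|_{1/2,\h}$, for which $\stabi{B^T\lh}{B^T\lh}=\h\,\|\lh\|^2_{0,\Sqi}$ controls the \emph{whole} multiplier at once. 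This is what makes Lemma \ref{infsupkerB} short and self-contained, whereas your variant buys a stronger, mesh-independent multiplier norm at the price of importing the coarse-space machinery. Your consistency observation and your insistence on an $L^2$-stable choice of the discrete multiplier are both correct, and are indeed what the paper uses silently: the Galerkin orthogonality in the error-bound subsection rests on exactly your consistency identity, and the paper's $\lI$ is exactly the $L^2(\Sqi)$ projection of $\l$ onto $\Lh^i$.

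One step you should rephrase, though it is not fatal: in item (iv) you control the mean jump by ``testing the perturbed second equation'', i.e.\ by using that $(\uh,\lh,\phh)$ solves the discrete problem. A stability (inf-sup) estimate must hold for \emph{arbitrary} $(\uh,\lh)\in\ker\B$, not only for the solution, so this control has to be obtained by testing the bilinear form $\A$ with the fracture-wise constant multiplier $\mh^{\pm}=\pm\fint_{S_m}\scjump{\uh}$, exactly as in Lemma \ref{infsupkerB}; the computation is the same, but it must be phrased at the level of the form rather than of the equations.
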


\section{Proof of theorem \ref{thm:5.2}}
We  start by rewriting  Problem \ref{thm:5.2}  as the saddle point problem, namely
\begin{gather*}
	\A(\uh,\lh;\vh,\mh) + \B(\vh,\mh;\ph) = \F(\vh,\mh)\\
	\B(\uh,\lh;\psh) = 0,
\end{gather*}
where $\A: \calV_\h \times \calV_\h \to \mathbb{R}$ and $\B: \calV_\h \times \Ph_\h \to \mathbb{R}$ are defined as
\begin{multline*}
	\A(\uh,\lh;\vh,\mh) = 
	\sumi   	\int_{\Fi} K_i \nabla \uh^i \cdot \nabla \vh^i 
	-  \sumi    \int_\Sqi \lhi  \vh^i 
	+  \sumi   	\int_\Sqi \uh^i \mh^i  \\
	+  \alpha \glstab{A\uh - B^T \lh}{t A\vh - B^T \mh},
\end{multline*}
\begin{equation*}
	\B(\uh,\lh;\psh) = 	\sumi  \int_{\Sqi} \lhi  \psh.
\end{equation*}

We then need to prove an inf-sup condition for $\B$ and a coercivity, or an inf-sup condition, for $\A$ on $\ker \B \subseteq \calV_\h$, where we use the notation $\ker \B$ to denote the  kernel of linear operator corresponding to the bilinear form $\B$,  which is the space of functions $(\uh,\lh)$ such that
\(
\sumi  \lhi  
\) is orthogonal to $\Phh$. 

\

We introduce mesh dependent norms on $\Lh$ and $\Phh$:
	\[
\| \l  \|_{-1/2,\h}^2 = \sumi  \h_i \| \lhi  \|^2_{0,\Sq_i}, \qquad \| \ph \|^2_{1/2,\h} = \sumi  \h_i^{-1} \| \ph \|^2_{0,\Sqi}
\]

The bilinear form $\B$ is continuous with respect to such norms. Indeed we have
\begin{multline}
\B(u,\l ;\ph ) \lesssim \sumi  \sum_{m \in \Ti} \| \l _i \|_{0,S_m} \| \ph  \|_{0,S_m} = \sumi  \sum_{m \in \Ti}  \h^{1/2} 
\| \l  _i \|_{0,S_m} \h^{-1/2} \| \ph  \|_{0,S_m}.
\end{multline}
Moreover by the definition of $[\cdot,\cdot]_{i,m,\h}$ we have that
\begin{multline*}
	\stab{A \uh} {B^T \lh}   = \h \int_{S_m} \l_\h^i(x) \left(
	\sum_{\ell} \ \tw_\ell(x) 
	\sum_{e \in \Edges} \weight_e^\ell \jump{ K_i \nabla \uh^i \cdot \norm}  
	\right)\\ \lesssim \h \| \l_\h^i \|_{0,S_m} \| \sum_{\ell} \ \tw_\ell(x) 
	\sum_{e \in \Edges} \weight_e^\ell \jump{ K_i \nabla \uh^i \cdot \norm}  \|_{0,S_m}\\
	\lesssim \h^{1/2} \| \lhi \|_{0,S_m} \sqrt{ \sum_k  \sum_{e\in \Edges} | \theta^\ell_e \jump{K_i\nabla \uh^i \cdot \norm} | ^2}\\
	\lesssim \h^{1/2} \| \lhi  \|_{0,S_m}  \sqrt{\h   \sum_{e\in \Edges} \|  \jump{K_i\nabla \uh^i \cdot \norm} \|_{0,e} ^2} \lesssim \h^{1/2} \| \lhi  \|_{0,S_m}   | \uh |_{1,\Fi}.
\end{multline*}
Analogously, we see that
\begin{multline*}
\stab{A \uh}{A \vh}  =	 	\h  
\sum_{e \in \Edges}
\sum_{e' \in \Edges}
\jump{ K_i \nabla \vh^i \cdot \norm} |_{e'}
\jump{ K_i \nabla \uh^i \cdot \norm} |_e 
\sum_k \sum_\ell  
\underbrace{\weight_e^k 
\weight_{e'}^\ell 
\widetilde{ \mathsf{m}}_{\ell k}}_{\lesssim \h} 
\\
\lesssim 
\h
\| \nabla \uh^i \|_{\infty,\Fi} 
\| \nabla \vh^i \|_{\infty,\Fi} \lesssim | \uh^i |_{1,\Fi} | \vh^i |_{1,\Fi},
\end{multline*}
where we used that, for a given $k$, the number of edges $e$ for which $\theta^k_e \not = 0$ is bounded independently of $\h$. 
Then, collecting the contributions of the different fractures and traces  we have that
\[
\A(\uh,\lh;\vh,\mh) \lesssim (\normoneh{\uh}  + \| \lh \|_{-1/2,\h}) (\normoneh{ \vh} + \| \mh \|_{-1/2,\h}).
\]

\NOTE{We also  observe that
\begin{multline*} 
	\stab{g}{B^T \lh} \leq \h \| \lh \|_{0,S_m} \| \sum_\ell \left(\int_{\Fim} g(y)  \hw_\ell(y) \,dy\right) \tw_\ell \|_{0,S_m}  \\
	\lesssim \h^{1/2} \| \lh \|_{0,S_m}  \sqrt{
		\sum_\ell \left(\int_{\Fim} g(y)  \hw_\ell(y) \,dy\right)^2
	}\\
	\lesssim \h^{1/2} \| \lh \|_{0,S_m}  \sqrt{
		| \Fim|	\sum_\ell \int_{\Fim} | g(y)  \hw_\ell(y)|^2 \,dy}\\
	\lesssim \h \| \lh \|_{0,S_m} 
	\sqrt{ \sum_\ell  \|  g \|^2_{0,\kappa_\ell}  } \lesssim \h \| \lh \|_{0,S_m} \| g \|_{0,\Fim},
\end{multline*}
where $\kappa_\ell = \supp \hw_\ell$, which gives us (part of the) continuity of the right hand side
}
\

The inf-sup condition for $\B$ with respect to the mesh dependent norms is also quite straightforward. We have 
\begin{multline*}
	\inf_{\phh \in \Ph} \sup_{\lh \in \Lh} \frac{\sumi  \int_{\Sqi}  \lhi \phh}{\| \lh \|_{-1/2,\h} \| \phh \|_{1/2,\h} } =
\inf_{\phh \in \Ph} \sup_{\lh \in \Lh} \frac{\sumi  \int_{\Sqi} (\h_i^{1/2} \lhi) (\h_i^{-1/2 }\phh)}{\| \h^{1/2} \lh \|_{0,\Sq} \| h^{-1/2} \phh \|_{0,\Sq} } \\
\geq 
\inf_{\phh \in \Ph} \sup_{\mh \in \Lh} \frac{\sumi  \int_{\Sqi}  \mh^i (\h_i^{-1/2 }\phh)}{\| \mh \|_{0,\Sq} \| \h_i^{-1/2}\phh \|_{0,\Sq} }  \gtrsim
 1.
\end{multline*}

\subsection{Inf-sup for  $\A$ on $\ker\B$} We have the following Lemma
\begin{lem}\label{infsupkerB}
There exists a $\alpha_0$ such that, provided $\alpha < \alpha_0$ it holds that for all $\uh, \lh$ there exists $\mu_h$ with
$\| \mh \|_{-1/2,\h} \lesssim\normoneh{ \uh } + \| \lh \|_{-1/2,\h}$, such that
	\[
	\A(\uh,\lh; \uh, \mh) \geq c^\sharp (\normoneh{\uh} + \alpha \| \lh \|_{-1/2,\h}).
	\]
\end{lem}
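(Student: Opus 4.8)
The plan is to prove the inf--sup by explicitly constructing the test multiplier $\mh$, in the spirit of the Barbosa--Hughes argument but augmented by a correction that is tailored to the jump part of $\normoneh{\uh}$. Throughout I take $(\uh,\lh)\in\ker\B$, so that on each trace $S_m=\Fi\cap F_j$ (with $i<j$) the discrete flux conservation $\lh^{i,m}=-\lh^{j,m}$ holds \emph{exactly}: since $\Lh^{i,m}=\Phh^m$, the kernel condition says that $\lh^{i,m}+\lh^{j,m}\in\Phh^m$ is $L^2(S_m)$-orthogonal to all of $\Phh^m$, hence vanishes. I also abbreviate $a(\uh,\uh)=\sumi\int_\Fi K_i|\nabla\uh^i|^2\gtrsim\sumi|\uh^i|^2_{1,\Fi}$.

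First I would test with $\mh=\lh$ and record two facts. The coupling terms cancel, $-\sumi\int_\Sqi\lhi\uh^i+\sumi\int_\Sqi\uh^i\lhi=0$, while, using the identity $\stab{B^T\mh}{B^T\lh}=\h\int_{S_m}\lh\mh$ from Section~\ref{sec:stab_meshdependent} (so that $\glstab{B^T\lh}{B^T\lh}=\|\lh\|^2_{-1/2,\h}$), the stabilization yields $\alpha\bigl[\,t\glstab{A\uh}{A\uh}-(1+t)\glstab{A\uh}{B^T\lh}+\|\lh\|^2_{-1/2,\h}\bigr]$. The diagonal term $a(\uh,\uh)$ is coercive; the indefinite cross term is controlled by the continuity bound $\glstab{A\uh}{B^T\lh}\lesssim\|\lh\|_{-1/2,\h}\,|\uh|_{1}$ already established, and the term $t\glstab{A\uh}{A\uh}$ by $\glstab{A\uh}{A\uh}\lesssim|\uh|_1^2$. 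Young's inequality then absorbs both into $a(\uh,\uh)$ and into half of $\alpha\|\lh\|^2_{-1/2,\h}$ provided $\alpha$ is small, giving $\A(\uh,\lh;\uh,\lh)\gtrsim\sumi|\uh^i|^2_{1,\Fi}+\alpha\|\lh\|^2_{-1/2,\h}$. This misses only the jump part of $\normoneh{\uh}$.

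To recover that part I would add a correction. Set $c_m=\fint_{S_m}\scjump{\uh}$ and define $\rho=(\rho^{i,m})$ by $\rho^{i,m}=c_m$ and $\rho^{j,m}=-c_m$ (constants on each $S_m$, so $\rho\in\ker\B$), and test with $\mh=\lh+\beta\rho$ for a fixed small $\beta$. Using $\lh^{i,m}=-\lh^{j,m}$ and $\scjump{\uh}=\uh^i-\uh^j$, the coupling terms now collapse to $\beta\sumi\int_\Sqi\uh^i\rho^i=\beta\summ c_m\int_{S_m}\scjump{\uh}=\beta\summ|S_m|\,|c_m|^2$, which is precisely the jump part of $\normoneh{\uh}$. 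The only price is the extra stabilization contribution $-\alpha\beta\glstab{A\uh-B^T\lh}{B^T\rho}$, which by the same continuity bounds is $\lesssim\alpha\beta\,(\normoneh{\uh}^{1/2}+\|\lh\|_{-1/2,\h})\,\|\rho\|_{-1/2,\h}$; since $\|\rho\|^2_{-1/2,\h}=2\h\summ|S_m|\,|c_m|^2$ carries the extra factor $\h$, this cross term is higher order and, after Young, is absorbed by half of the jump gain and by $a(\uh,\uh)+\alpha\|\lh\|^2_{-1/2,\h}$ once $\beta$ is fixed and $\alpha$ is shrunk. Collecting everything gives $\A(\uh,\lh;\uh,\mh)\geq c^\sharp\bigl(\normoneh{\uh}+\alpha\|\lh\|^2_{-1/2,\h}\bigr)$, and the norm bound follows from $\|\mh\|^2_{-1/2,\h}\leq 2\|\lh\|^2_{-1/2,\h}+2\beta^2\|\rho\|^2_{-1/2,\h}\lesssim\|\lh\|^2_{-1/2,\h}+\normoneh{\uh}$, using $\h\lesssim1$.

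The main obstacle is the bookkeeping of the several indefinite cross terms and, above all, the order in which the smallness parameters are chosen: one must first fix $\beta$ so that the jump gain dominates its own $\rho$-cross term, and only then shrink $\alpha$ to absorb the $A\uh$--$B^T\lh$ couplings into the coercive part $a(\uh,\uh)$ and into $\alpha\|\lh\|^2_{-1/2,\h}$; one must also check that all constants (entering through $K_i$, shape regularity, and the norm equivalences on the auxiliary mesh) are mesh independent, so that $c^\sharp$ and $\alpha_0$ do not degenerate as $\h\to0$. A minor technical caveat is that on traces $S_m$ meeting $\bOD$ the constant $c_m$ must be replaced by its $\Phh^m$-projection, which affects none of the estimates above.
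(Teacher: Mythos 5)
Your proposal is correct and follows essentially the same route as the paper's proof: first coercivity from testing with $(\uh,\lh)$ (absorbing the $\glstab{A\uh}{A\uh}$ and $\glstab{A\uh}{B^T\lh}$ terms for small $\alpha$), then recovery of the jump part of $\normoneh{\uh}$ by testing with the piecewise-constant multiplier built from $\pm\fint_{S_m}\scjump{\uh}$ (your $\rho$ is exactly the paper's correction $\mh$), and finally a linear combination with a small parameter absorbing the remaining cross terms. The only differences are cosmetic — the order in which the smallness parameters are fixed, your (harmless but unnecessary) use of the exact flux conservation $\lh^{i,m}=-\lh^{j,m}$ on $\ker\B$, and your explicit caveat about traces meeting $\bOD$, which the paper passes over in silence.
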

\begin{proof} 
We can write
\begin{multline*}
	\A(\uh,\lh;\uh,\lh) = 
	\sumi   K_i | \uh^i |_{1,\Fi}^2 
	+  \alpha \glstab{A\uh - B^T \lh}{t A\uh - B^T \lh} \\=
		\sumi  K_i  \left( | \uh |_{1,\Fi}^2 + 
		\alpha \stabi{B^T \lh}{B^T \lh} \right)
		\\ - (1 + t) \alpha 	  \glstab{A \uh } { B^T \lh} + \alpha t   \glstab{A \uh} { A \uh}\\
		\geq
	\sumi \left(  K_i	| \uh |_{1,\Fi}^2 	+ \frac{\alpha}2 \glstab{B^T \lhi }{B^T \lhi } - \frac \alpha 2 
	(1 + 3|t| + | t |^2) \glstab{A \uh^i}{A \uh^i}  \right) \\
		 \geq
		\sumi  \left(	| \uh |_{1,\Fi}^2 	+ \frac\alpha 2 \| \lh \|^2_{-1/2,\h} - C(t) \alpha \sumi  K_i | \uh |_{1,\Fi}^2  \right).
\end{multline*}

\NOTE{
\begin{multline*}
\alpha[x,x] - (1+t) \alpha [ y, x] + \alpha t [ y,y]
\geq \alpha [x,x] - (1+t) \alpha \sqrt{[x,x]}\sqrt{[y,y]} + \alpha t [y,y] \\
\geq
 \alpha [x,x] - (1+| t |) \alpha \sqrt{[x,x]}\sqrt{[y,y]} - \alpha | t | [y,y] \gtrsim
 \alpha [x,x] -  \frac \alpha 2 {[x,x]} -
  (1+| t |)^2   \frac \alpha 2 [y,y] - \alpha | t | [y,y]\\
  \frac \alpha 2 [x,x] - \alpha(\frac 12 + \frac32| t | + | t |^2) [y,y].
\end{multline*}

}

Choosing $\alpha$ sufficiently small, this gives us 
\begin{equation}\label{coercnonconst}
	\A(\uh,\lh;\uh,\lh) \geq  \ccoerca	\Big(\sumi  K_i | \uh |_{1,\Fi}^2 	+ \alpha  \| \lh \|^2_{-1/2,\h}\Big),
\end{equation}
which allows to control $\lh$ and $\uh$, up to constants in the fractures. To control the constant, we need to test the bilinear form $\A$  with a suitably chosen constant multiplier. 
\[
	\A(\uh,\lh;0,\mh) =    \sumi   	\int_\Sqi \uh^i \mh^i  
	-  \alpha \glstab{A\uh - B^T \lh}{ B^T \mh}
\]

Using $+$ and $-$ as labels for the two fractures whose intersection is a given trace,  on $S_m = \F^+ \cap \F^-$ we set $\mh^+ = \fint_{S_m} (\uh^+ - \uh^-)$, 
$\mh^- = -\fint (\uh^+ - \uh^-)$.  Observe that $(0,\mh) \in \ker\B$.
We have:
\begin{multline*}
	\A(\uh,\lh;0,\mh) =    \sumi   	\sum_{m \in \Ti} \int_{S_m} \uh^i \mh^i  
-  \alpha \glstab{A\uh - B^T \lh}{ B^T \mh}\\ =
\summ \sum_{i \in \Fm} \int_{S_m} \uh^i \mh^i  
-  \alpha \glstab{A\uh - B^T \lh}{ B^T \mh}
\end{multline*}
We have
\begin{multline*}
\sum_{i \in \Fm} \int_{S_m} \uh^i \mh^i   = \int_{S_m} \uh^+ \mh^+ + \int_{S_m} \uh^- \mh^- = \int_{S_m} (\uh^+  -  \uh^-) \mh^+ \\=
 \int_{S_m} (\uh^+  -  \uh^-) \fint_{S_m} (\uh^+ - \uh^-) = | S_m | \left| \fint_{S_m} \scjump{\uh} \right|^2
\end{multline*}
Moreover we have that
\[\| \mh \|_{-1/2,\h} =
\summ \h \| \mh \|_{0,S_m}^2 = \summ \h | S_m | \left| \fint_{S_m} \scjump{\uh} \right|^2
\]
Then
\begin{multline*}
\A(\uh,\lh;0,\mh) = \summ | S_m | \left| 
\fint_{S_m} \scjump{\uh} \right|^2
- \alpha\glstab{A\uh - B^T\lh}{B^T \mh} \\
\geq
 \summ | S_m | \left| 
\fint_{S_m} \scjump{\uh} \right|^2 - 
\alpha C_1 \| \mh \|^2_{-1/2,\h} - \alpha C_2\sumi | \uh^i |_{1,\Fi}^2 -
\alpha C_3  \| \mh \|_{-1/2,h} \| \lh \|_{-1/2,h} \\
\geq
(1 - \alpha \const4)  \summ | S_m | \left| 
\fint_{S_m} \scjump{\uh} \right|^2 - \alpha \const2
\sumi | \uh^i |_{1,\Fi}^2
  - \alpha \const5 \| \lh \|^2_{-1/2,\h}.
\end{multline*}
whence, if $\alpha$ is sufficiently small
\begin{equation}\label{coercconst}
\A(\uh,\lh;0,\mh) \geq \ccoercb \summ  | S_m | \left| 
\fint_{S_m} \scjump{\uh} \right|^2 - \alpha \const2 \sumi | \uh^i |_{1,\Fi}^2 - \alpha \const5 \| \lh \|^2_{-1/2,\h}.
\end{equation}
By combining \eqref{coercnonconst} and \eqref{coercconst} we can write
\begin{multline*}
	\A(\uh,\lh; \uh,\lh + \tau \mh ) = \A(\uh,\lh;\uh,\lh) + \tau \A(\uh,\lh; 0,\mh)\\
	 \geq 
	(\ccoerca - \tau \alpha \const2) \sumi | \uh^i |_{1,\Fi}^2 + 
	\alpha (\ccoerca  - \tau  \const5) \| \lh \|_{-1/2,\delta}^2 + \tau
	\ccoercb \summ | S_m | \left| 
	\fint_{S_m} \scjump{\uh} \right|^2.
\end{multline*}
Choosing $\tau = \min\{ \ccoerca / (2\alpha C_2), \ccoerca/(2 C_5)\}$ we obtain
\begin{multline*}
	\A(\uh,\lh; \uh,\lh + \tau \mh )  \geq \frac \ccoerca 2\sumi | \uh^i |_{1,\Fi}^2 + \frac \ccoerca 2  \alpha \| \lh \|_{-1/2,\h}^2
+ \tau \ccoercb \summ | S_m | \left| 
\fint_{S_m} \scjump{\uh} \right|^2 \\ \gtrsim \normoneh{\uh}^2 + \alpha \| \lh \|_{-1/2,\h}^2.\end{multline*}
\end{proof}


\subsection{Error bound}

 Let $\uI$ denote the projection of $\u$ onto $V_h^i$ with respect to the scalar product inducing the $\normoneh{\cdot}$ norm, and let $\l_\pi \in \Lh$ and $\phI \in \Phh$ be defined as
\[
\int_\Sqi \l^i_\pi \mh^i = \int_\Sqi \l  \mh^i, \quad \forall \mh^i \in \Lh^i, \qquad \sumi  \h_i \int_{\Sqi} \phI \psh = \sumi   \h_i \int_{\Sqi} \ph \psh, \quad \forall \psh \in \Phh. 
\]
Since $\Lh^i = \Phh|_\Sqi$, this implies that
\[
\sumi  \l^i_\pi 
\psh =  \sumi  \l^i
\psh = 0,
\]
that is, $(\uI,\l_\pi)  \in \ker \B$.
Thanks to Lemma \ref{infsupkerB}, there exists a $\mh \in \ker \B$ such that
\[
\normoneh{\uh - \uI}+ \| \lh - \lI \|_{-1/2,\h} \lesssim \frac {\A(\uh - \uI,\lh - \lI;\uh - \uI,\mh)} { \normoneh{\uh - \uI} + \| \mh \|_{-1/2,\h}}
\]
Now we can write
\begin{multline*}
\A(\uh - \uI,\lh - \lI;\uh - \uI,\mh) = \A(\uh - \uI,\lh - \lI;\uh - \uI,\mh) + \underbrace{\B(\uh - \uI,\mh;\phh - \phI)}_{=0} \\ =
\underbrace{\A(\uh - \u,\lh - \l;\uh - \uI,\mh) + \B(\uh - \u,\mh;\phh - \ph)}_{=0} \\+ 
\A(\u - \uI,\l - \lI;\uh - \uI,\mh) + \B(h - \uI,\mh;\ph - \phI)\\
 \lesssim \Big(\normoneh{\u - \uI} + \| \l - \lI \|_{-1/2,\h}\Big)\Big( \normoneh{\uh - \uI} + \| \lh - \lI \|_{-1/2,\h} \Big) \\+ \| \mh \|_{-1/2,\h} \| \ph - \phI \|_{1/2,\h},
\end{multline*}
which yields
\[
\normoneh{\uh - \uI} + \| \lh - \lI \|_{-1/2,\h}  \lesssim \normoneh{\u - \uI} + \| \l - \lI \|_{-1/2,\h} + \| \ph - \phI \|_{1/2,\h}.
\]
The thesis follows thanks to our choice of $\uI$, $\lI$ and $\phI$, which minimizes  the right hand side of the above bound over all discrete functions.

\section{Numerical results}\label{sec:numtests}
The present section is devoted to the presentation of the performances of the above stabilizations. All tests are performed using linear Lagrangian Finite Elements on triangular meshes.

\begin{figure}
\centering
\includegraphics[width=0.75\textwidth]{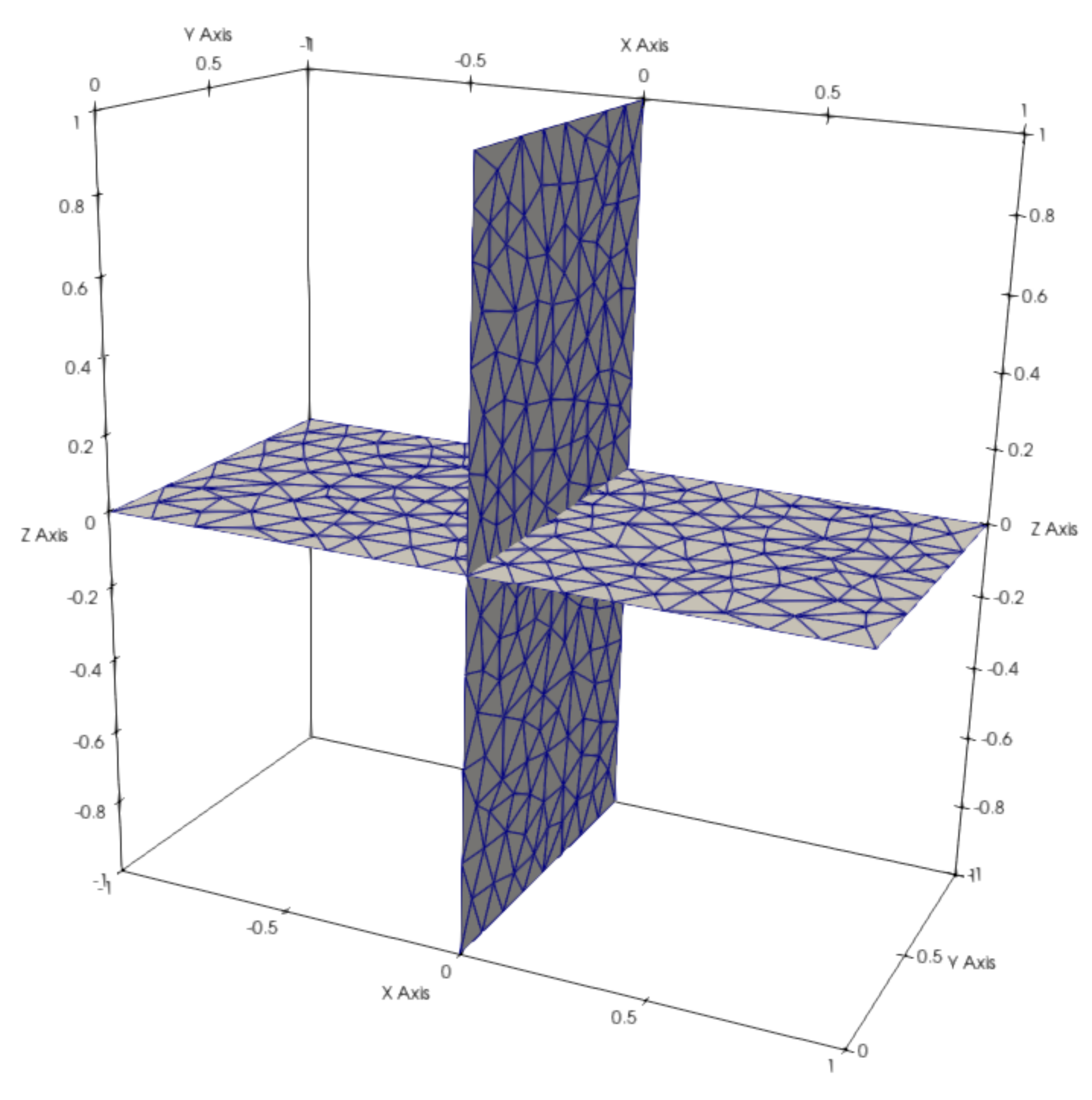}
\caption{\textit{test1}: Domain with a mesh ($\delta=0.1$) non conforming with fracture intersection.}
\label{test1Dom}
\end{figure}
\begin{figure}
\centering
\includegraphics[width=0.55\textwidth]{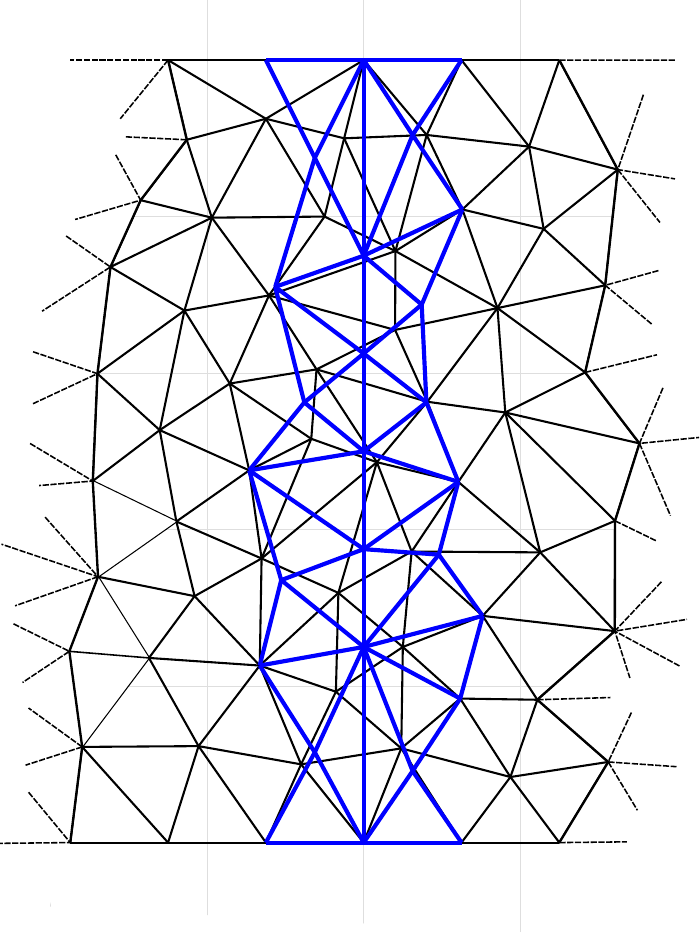}
\caption{\textit{test1}: Detail of mesh $\hat{\mathcal{T}}$ on fracture $F_1$, $\delta=0.1$.}
\label{test1StabMesh}
\end{figure}  

The first test, labelled \textit{test1} takes into account a network composed of two intersecting fractures, as shown in Figure~\ref{test1Dom}. We denote by $F_1$ the fracture lying on the plane $z=0$, by $F_2$ the fracture on the plane $x=0$ and by $\Sigma=\bar{F_1}\cap\bar{F_2}$ the intersection segment. Homogeneous Dirichlet boundary conditions are prescribed on the whole boundary and a forcing term is chosen such that the exact solution is:
\begin{displaymath}
\begin{cases}
h^1=y(y-1)|x|(x^2-1);\\
h^2=y(1-y)|z|(z^2-1).
\end{cases}
\end{displaymath}

With the above data we then solve Problem~\ref{pb:discrete-stabilized} and Problem~\ref{pb:3bBHstab} on five different meshes with mesh parameter $\delta=\{0.22,0.1, 0.071 ,0.032,0.014\}$, with $t=0$ and $\alpha=\{10,1,0.1,0.01,0.001\}$. Figure~\ref{test1StabMesh} shows mesh $\hat{\mathcal{T}}$ on $F_1$ used to construct the stabilizations, for a mesh parameter $\delta=0.1$. We also solve the same problem with a PDE-constrained optimization formulation \cite{BPSc}, on the same meshes. In all cases the resulting linear systems are solved using a direct method. 
Figures~\ref{test1L2err}-\ref{test1H1err} report the $L^2$ and $H^1$ norms of the error, respectively, of the obtained solutions with respect to the analytical one. In these figures, the black solid line refers to the reference solution whereas the other solid curves refers to the solutions of Problem~\ref{pb:discrete-stabilized} with different values of $\alpha$ and the dashed lines to the solutions of Problem~\ref{pb:3bBHstab} with the same values of $\alpha$. It can be observed that for $\omega \leq 1$ the error curves lie below those of the reference solution, and display a similar convergence trend. Sub-optimal convergence rates for linear FE are observed, due to the non-conformity of the mesh with the interface. The condition number of the system for the various considered values of $\alpha$ is reported in Figure~\ref{test1Cond}. We can see that almost all curves show a minimum of the conditioning for $\alpha=1$. In this case the system related to Problem~\ref{pb:3bBHstab} appears to have a better conditioning with respect to that of Problem~\ref{pb:discrete-stabilized}. The obtained solution of Problem~\ref{pb:discrete-stabilized} on fracture $F_1$ with the mesh having parameter $\delta=0.1$ is finally shown in Figure~\ref{test1SolF1}

\begin{figure}
\centering
\includegraphics[width=0.75\textwidth]{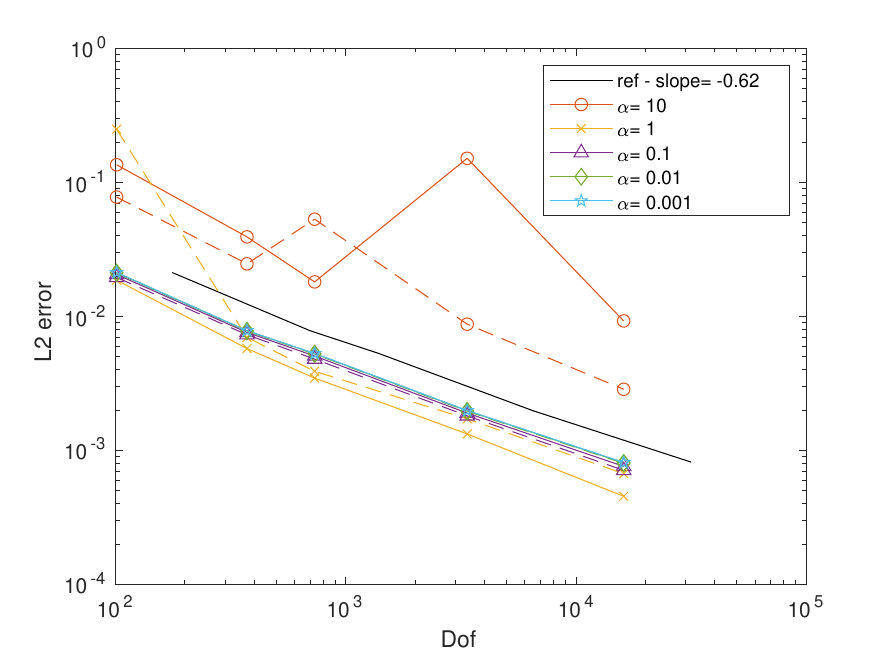}
\caption{\textit{test1}: $L^2$ error for different values of the stabilization parameter $\alpha$, for both the natural norm stabilization (solid line) and the mesh dependent stabilization (dashed line).}
\label{test1L2err}
\end{figure}
\begin{figure}
\centering
\includegraphics[width=0.75\textwidth]{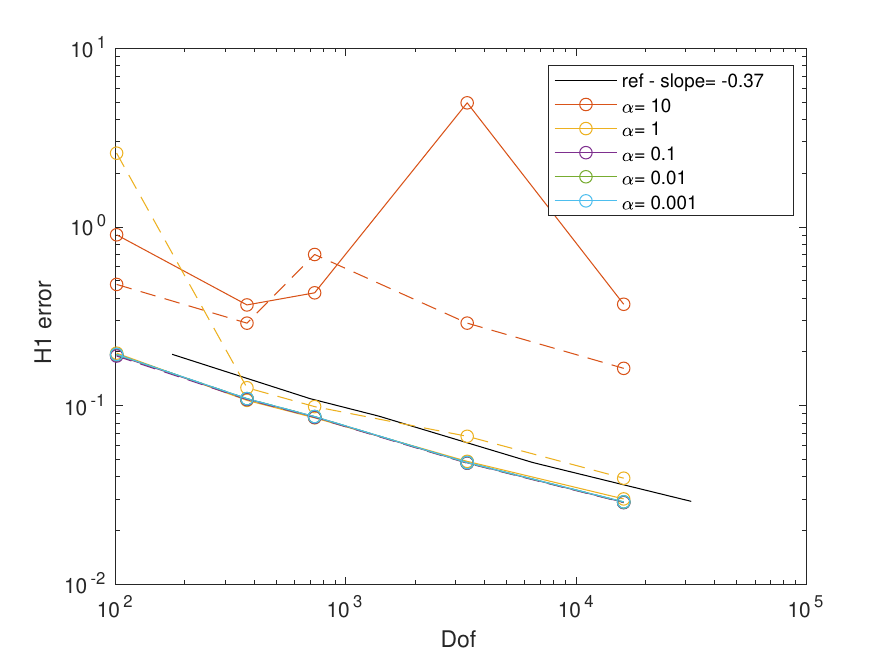}
\caption{\textit{test1}: $H^1$ error  for different values of the stabilization parameter $\alpha$, for both the natural norm stabilization (solid line) and the mesh dependent stabilization (dashed line).}
\label{test1H1err}
\end{figure}

\begin{figure}
\centering
\includegraphics[width=0.75\textwidth]{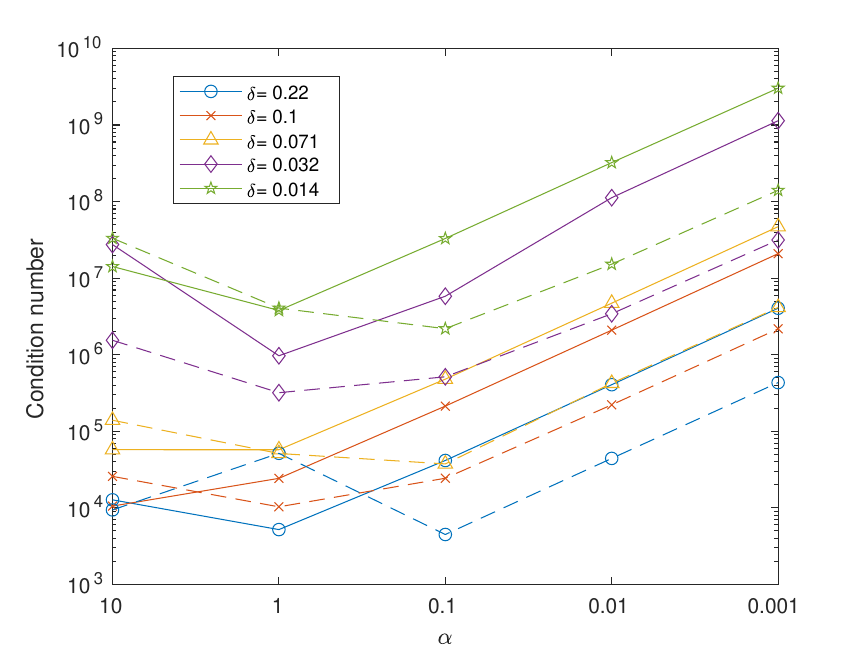}
\caption{\textit{test1}: Condition number  for different values of the stabilization parameter $\alpha$, for both the natural norm stabilization (solid line) and the mesh dependent stabilization (dashed line). }
\label{test1Cond}
\end{figure}
\begin{figure}
\centering
\includegraphics[width=0.75\textwidth]{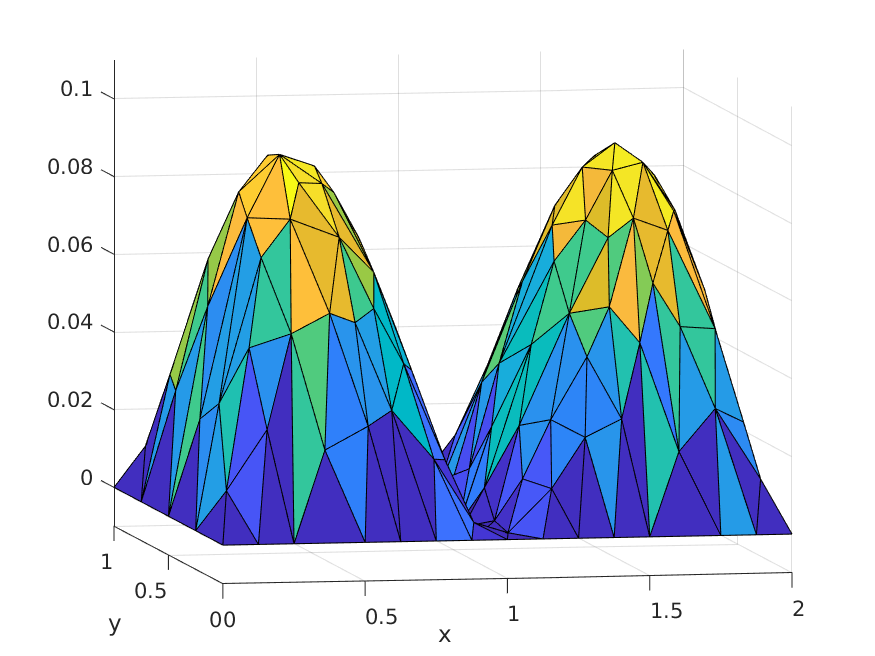}
\caption{\textit{test1}: Solution on $F_1$, $\delta=0.1$, $\omega=0.1$.}
\label{test1SolF1}
\end{figure}

The second test case takes into account a network of three intersecting fractures, with two slightly different configurations, as shown in Figure~\ref{test2Domain}. With reference to this figure, fracture $F_1$ lies on $z=0$, with edge length along $x$ equal to $2$ and edge length along $y$ equal to one. Fractures $F_2$ and $F_3$ are perpendicular to $F_1$ and are placed symmetrically with respect to the origin of a reference frame passing through the center of $F_1$. In the configuration of Figure~\ref{test2Domain}, left, the distance between $F_2$ and $F_3$ is $0.4$, and the stabilization meshes (shown in coloured thicker lines) are disjoint. We denote this case as \textit{test2A}. In the right configuration, instead, the distance between $F_2$ and $F_3$ is $0.05$ and the two stabilization meshes are partially overlapped, with the only exception of the finest considered mesh, thus violating the assumptions made in Section \ref{sec:StabTerm} for the construction of the stabilization term. This is termed \textit{test2B}. In both cases a unit/homogeneous Dirichlet boundary condition is placed on the top edge of fracture $F_2$/$F_3$, respectively, and homogeneous Neumann boundary conditions are prescribed on the remaining part of the boundary of the network. The forcing term is null, such that the analytic solution can be easily computed.
Figures~\ref{L2errA}-\ref{condA} refer to \textit{test2A}. Figures~\ref{L2errA}-\ref{H1errA} show convergence curves of the error with respect to the analytic solution in $L^2$ and $H^1$ norms against mesh refinement and for values of $\alpha=\{1,0.1,0.01,0.001,0.0001\}$. Figure~\ref{condA} shows the conditioning of the algebraic system for various meshes and for the same values of $\alpha$. As before, solid lines refer to Problem~\ref{pb:discrete-stabilized} and dashed lines to Problem~\ref{pb:3bBHstab}, whereas the black solid line corresponds to the reference solution obtained with the optimization based approach. We can observe a behaviour similar to the previous case. Higher values of $\alpha$ provide the lowest conditioning, and error convergence curves comparable with the reference are obtained. Notably, for $\omega=1$, the natural norm stabilization in Problem~\ref{pb:discrete-stabilized} provides lower errors compared to the mesh dependent stabilization in Problem~\ref{pb:3bBHstab}.
Figures~\ref{L2errB}-\ref{condB} refer instead to \textit{test2B}. Figures~\ref{L2errB}-\ref{H1errB} report convergence curves and Figure~\ref{condB} the conditioning of the discrete problem. If compared to the case with disjoint stabilization meshes, in \textit{test2B} higher values of both the $L^2$ and the $H^1$ errors are obtained, and also a worse conditioning is observed. Again the natural norm stabilization in Problem~\ref{pb:discrete-stabilized} with $\omega=1$ provides lower errors compared to the mesh dependent stabilization in Problem~\ref{pb:3bBHstab}, with a higher gap, especially on the coarser meshes.  This suggests that, when violating the simplifying assumption that the traces are well separated, the natural norm stabilization should be preferred to the mesh dependent one.

\begin{figure}
\centering
\includegraphics[width=0.75\textwidth]{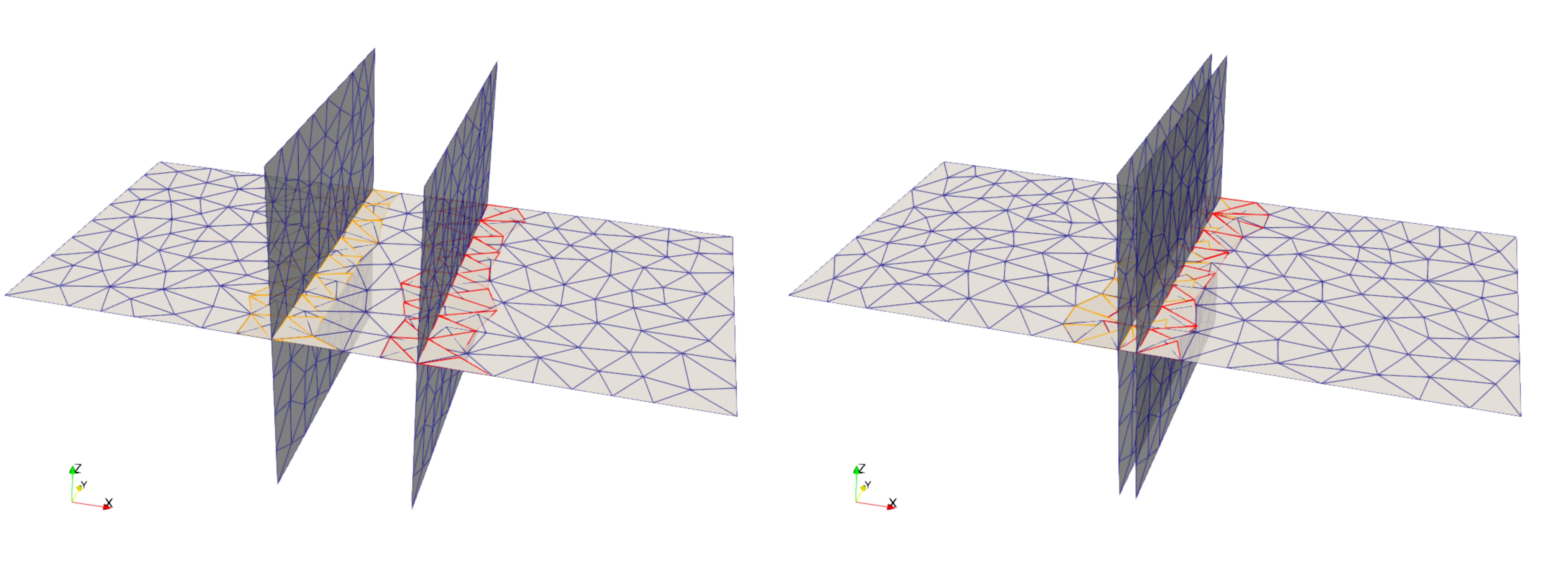}
\caption{\textit{test2}: domain and mesh for configurations A (left) and B (right)}
\label{test2Domain}
\end{figure}

\begin{figure}
\centering
\includegraphics[width=0.75\textwidth]{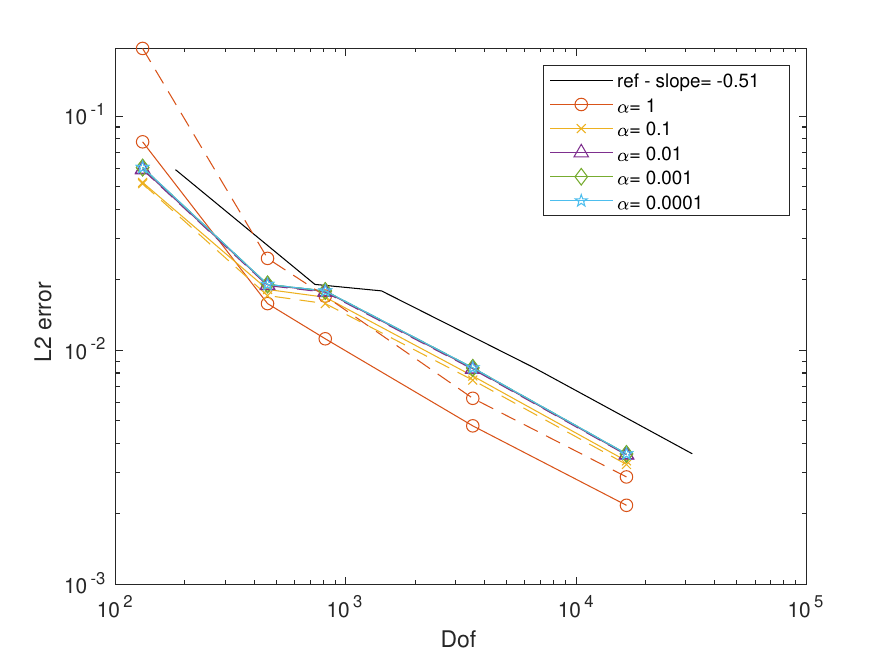}
\caption{\textit{test2A}: $L^2$ error  for different values of the stabilization parameter $\alpha$, for both the natural norm stabilization (solid line) and the mesh dependent stabilization (dashed line).}
\label{L2errA}
\end{figure}

\begin{figure}
\centering
\includegraphics[width=0.75\textwidth]{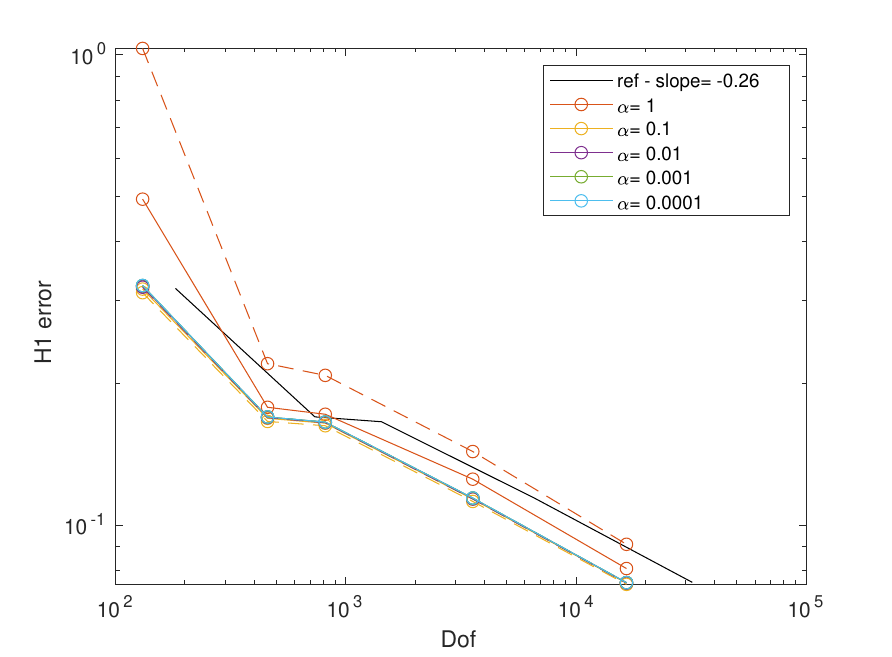}
\caption{\textit{test2A}: $H^1$ error for different values of the stabilization parameter $\alpha$, for both the natural norm stabilization (solid line) and the mesh dependent stabilization (dashed line).}
\label{H1errA}
\end{figure}

\begin{figure}
\centering
\includegraphics[width=0.75\textwidth]{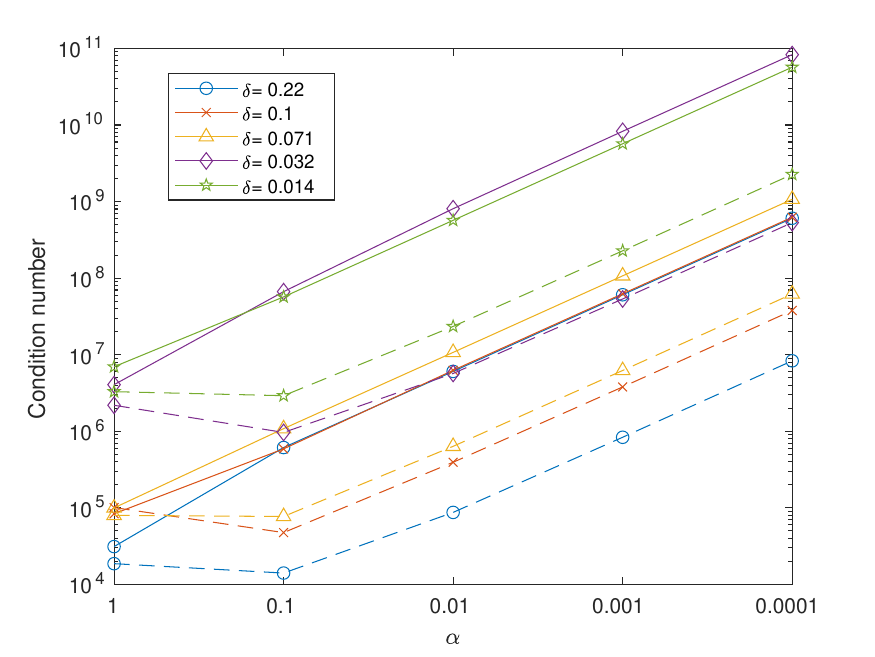}
\caption{\textit{test2A}: System conditioning for different meshes and various values of $\omega$.}
\label{condA}
\end{figure}

\begin{figure}
\centering
\includegraphics[width=0.75\textwidth]{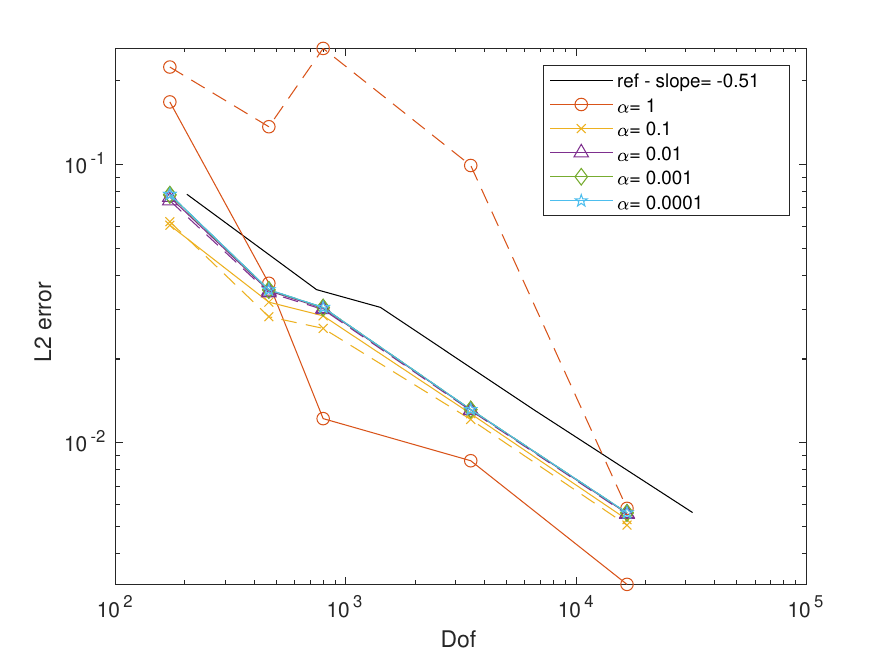}
\caption{\textit{test2B}: $L^2$ error for different values of the stabilization parameter $\alpha$, for both the natural norm stabilization (solid line) and the mesh dependent stabilization (dashed line).}
\label{L2errB}
\end{figure}

\begin{figure}
\centering
\includegraphics[width=0.75\textwidth]{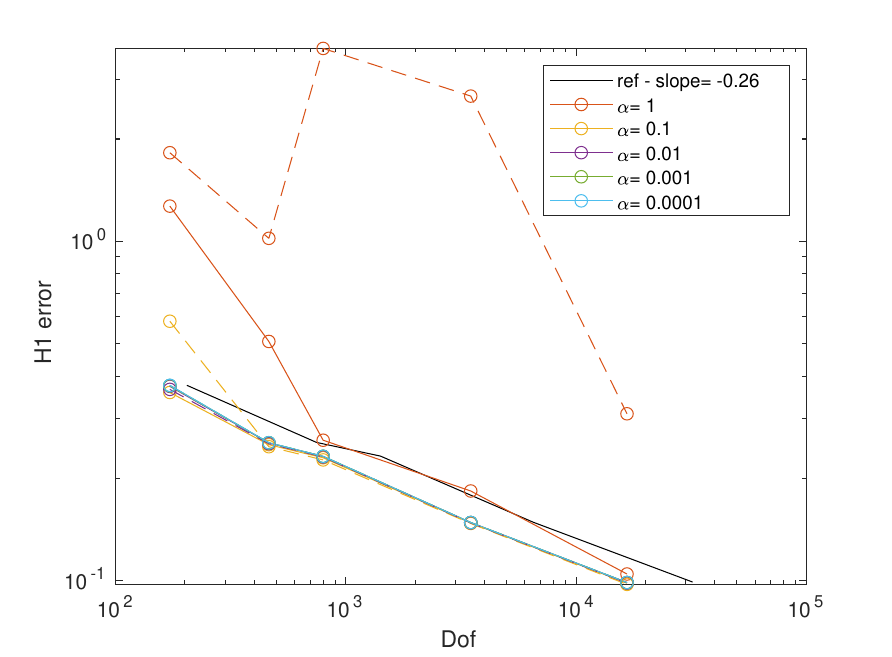}
\caption{\textit{test2B}: $H^1$ error for different values of the stabilization parameter $\alpha$, for both the natural norm stabilization (solid line) and the mesh dependent stabilization (dashed line).}
\label{H1errB}
\end{figure}

\begin{figure}
\centering
\includegraphics[width=0.75\textwidth]{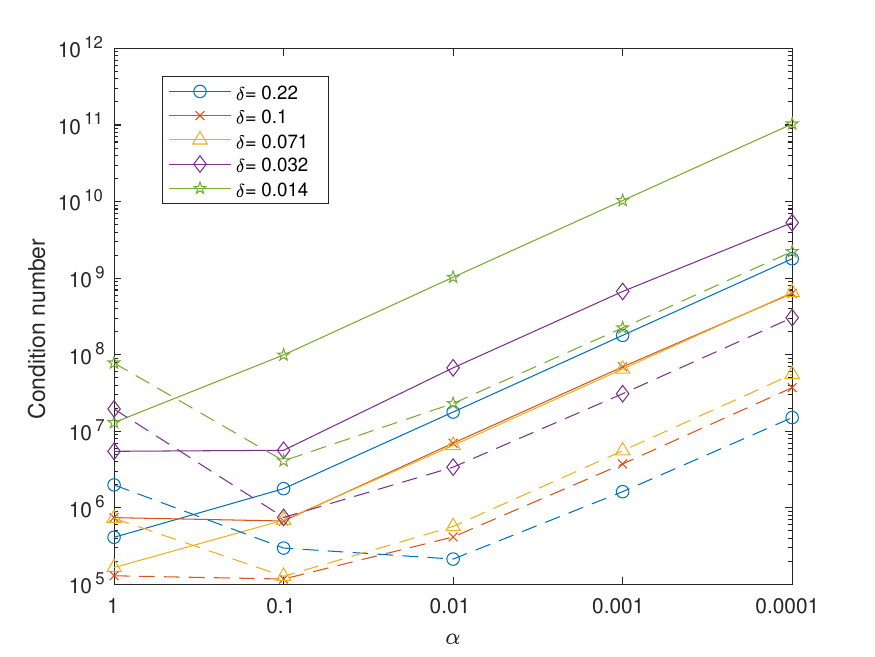}
\caption{\textit{test2B}: System conditioning for different meshes and various values of $\omega$.}
\label{condB}
\end{figure}

\section{Conclusions and future development}
We presented a hybridized domain decomposition method for the simulation of discrete fracture networks, for which we proposed two different stabilization strategies, respectively penalizing the residual in the natural norm and in a mesh dependent norm, the latter being a generalization of the Barbosa-Hughes stabililization to the case of an interface not aligned with the mesh. For both stabilization we proved stability and an optimal error estimate. The resulting numerical method has the potential to yield efficient discretization of DFN, that we plan to explore in the future. Indeed, in the domain decomposition framework, the formulation allows for the design of highly efficient preconditioners, which we hope to generalize to the DFN framework.
Several issues still remain to be addressed. In particular, the analysis of the method was carried out under the assumption that the different intersections are ``well separated'' from each other. Extending the theoretical results to more realistic case where two components of the interface can be very close to each other or even cross each other, possibly at a very small angle, will require either constructing a combined auxiliary space or to handle the sum of non linearly independent auxiliary spaces. 
Numerical test seem to suggest that in such cases the natural norm stabilization is to be preferred to the mesh dependent one.
We will address this non trivial issue in a future work.

\bibliographystyle{plain}
\bibliography{biblio}

\end{document}